\def\dual                 {{\vee}} 
\def\rk                 {{\rm rk}}
\def\GKZ			{{\rm GKZ}}
\def\ZZ                 {{\mathbb Z}} 
\def\PP                {{\mathbb P}} 
\def\RR                 {{\mathbb R}} 
\def\CC                 {{\mathbb C}}
\def\Aa    {{\mathcal A}}
\newtheorem{lemma}{Lemma}[section] 
\newtheorem{theorem}[lemma]{Theorem} 
\newtheorem{corollary}[lemma]{Corollary} 
\newtheorem{proposition}[lemma]{Proposition} 
\theoremstyle{definition} 
\newtheorem{definition}[lemma]{Definition} 
\newtheorem{example}[lemma]{Example}
\newtheorem{remark}[lemma]{Remark} 
\theoremstyle{remark} 
\newtheorem*{proof*}{Proof} 
\title[On the better behaved version of the GKZ ...]{On
the better behaved version of the GKZ hypergeometric  
system}
\author{Lev A. Borisov and R. Paul Horja} 
\address{Department of Mathematics \\ Rutgers University \\ 
Piscataway \\ NJ \\ 08854-8019 \\ USA \\{\tt borisov@math.rutgers.edu}} 
 \address{Department of Mathematics \\ Oklahoma State University \\ 
Stillwater\\ OK \\ 74078 \\ USA \\{\it Current address}:
Universit\"at Wien \\ Fakult\"at f\"ur Mathematik \\ 
A-1090 Wien \\ Austria\\{\tt richard.paul.horja@univie.ac.at}} 
\thanks{The first  
author was partially supported by the NSF grant DMS-1003445. The second author
was partially supported by the NSA grant MDA904-10-1-0190.}
\begin{document} 
 
\begin{abstract} We consider a version of the generalized hypergeometric
system introduced by Gelfand, Kapranov and Zelevinski (GKZ) suited for the 
case when the underlying lattice is replaced by a finitely generated
abelian group. In contrast to the usual GKZ hypergeometric system, the rank of the better 
behaved GKZ hypergeometric system is always the expected one. 
We give largely self-contained proofs of many properties of this system.
The discussion 
is intimately related to the study of the 
variations of Hodge structures of 
hypersurfaces in algebraic tori.
\end{abstract} 
 
\maketitle

\section{Introduction} 
The seminal paper of Gelfand, Kapranov and Zelevinsky 
\cite{GKZ} introduced generalizations of the classical hypergeometric function
as solutions to certain systems of partial differential equations 
defined by combinatorial data. 
These GKZ hypergeometric functions have been studied extensively in the subsequent years. In particular, they appear naturally in the study of mirror symmetry for hypersurfaces and complete intersections in toric varieties,
see \cite{BatyrevVanstratten, HLY, stienstra}. Unfortunately, the version of the GKZ hypergeometric system that is most suitable to mirror symmetry applications has received scant attention in the more general works on the GKZ systems, such as \cite{MMW,SST}. The disconnect 
between the general theory and the most interesting (from our viewpoint) applications makes the theory seem far more formidable than it really is and inhibits the development of mirror symmetry.

In our paper we aim to bridge this gap by considering the \emph{better behaved version} 
of the GKZ hypergeometric system. The term "better behaved" is used to indicate the absence of the rank jumping phenomena.
It is also precisely the version of the system that is most useful for mirror symmetry applications, see for example \cite{Borisov2}.

We deliberately try to make our arguments as self-contained as possible.
In order to make the subject more accessible, we avoid any $D$--module technology.
In the process we give a more algebraic treatment of the cohomology
spaces of nondegenerate hypersurfaces in algebraic tori that were originally studied by
Batyrev in \cite{Batyrev}.

Let us first recall the original definition of GKZ hypergeometric system.
Let 
$\Aa= \{ v_1, \ldots, v_n \}$ be a set of vectors in the
lattice $N \cong \ZZ^d$ such that the elements of $\Aa$ 
generate the lattice as an abelian group, and there exists a
group homomorphism ${\rm deg} : N \to \ZZ$ such that ${\rm deg}(v)=1$ for any element
$v \in \Aa.$ Let $L \subset \ZZ^n$ denote the lattice of integral relations 
among the elements of $\Aa$ consisting of vectors $l=(l_i) \in \ZZ^n$ 
such that $l_1 v_1 + \ldots + l_n v_n =0.$ 

For any parameter 
$\beta \in N \otimes \CC,$  
Gelfand, Kapranov and Zelevinsky \cite{GKZ}
considered  a
system of differential equations on the function $\Phi(x),$
$x=(x_1, \ldots, x_n) \in \CC^n,$ consisting of the
binomial equations
$$
\Big( \prod_{i, l_i >0} (\partial_i)^{l_i}
- \prod_{i, l_i < 0} (\partial_i)^{-l_i} \Big)
\Phi =0, \ l \in L, 
$$
and the linear equations
$$
\Big( \sum_{i=1}^n \mu(v_i) x_i \partial_i \Big) \Phi= \mu(\beta) \Phi, \
\text{for all $\mu \in M = {\rm Hom} (N, \ZZ).$}
$$
Gelfand, Kapranov and Zelevinsky
showed that this system is holonomic, so the number
of linearly independent solutions at a generic point is finite.
Following Batyrev's observation \cite[Section 14]{Batyrev}
that the periods of a Calabi--Yau
hypersurface in a projective toric variety satisfy a
GKZ system, its study gained further prominence
in connection with mirror symmetry phenomena
and algebra geometric applications. 

The rank of the GKZ system (the dimension of its 
solution set at a generic point) and the solution set itself
have also been the subject of numerous studies. 
Its expected dimension 
is equal to the normalized volume 
of the convex hull $\Delta$ of the elements of the
set $\Aa.$ However, 
unless the toric ideal
associated to $\Aa$ is Cohen--Macaulay,
there are non-generic values of $\beta$ for which 
the rank jumps. This rank discrepancy has been thoroughly
investigated by many authors (see, for example, 
Adolphson \cite{A}, Saito, Sturmfels and Takayama \cite{SST}, 
Cattani, Dickenstein and Sturmfels \cite{CDS}) 
and a quite definitive explanation for it
has been obtained 
in the work of Matusevich, Miller 
and Walther \cite{MMW}.

In the present work, we focus on a {\it better
behaved} version of the GKZ system whose
space of solutions always has the expected
dimension. We frame the definition 
in a context where the lattice is replaced
by a finitely generated abelian group $N,$ and 
the set $\Aa$ is replaced by an 
$n$-tuple $\Aa=(v_1,\ldots,v_n)$ of elements of $N,$ 
with possible repetitions.
Given 
a parameter 
$\beta$ in $N \otimes \CC,$ the better 
behaved GKZ system consists of the equations
$$
\partial_i\Phi_c=\Phi_{c+v_i},~{\rm~for~all~}c\in K,~i\in\{1,\ldots,n\} 
$$
and the linear equations
$$
\sum_{i=1}^n \mu(v_i) x_i\partial_i \Phi_c=\mu(\beta-c) \Phi_c, 
{\rm~~for~all~}\mu\in M,~c\in K. 
$$
A solution to the better behaved GKZ system 
is then a
sequence of functions of $n$ variables 
$\big(\Phi_c(x_1, \ldots, x_n)\big)_{c \in K},$
where $K$ is the preimage under the map $N \to N \otimes \RR$
of the cone 
$K_{\RR}$ generated by the images of the elements $v_i$
in $N \otimes \RR.$

When $N$ is a lattice and $\Aa$ is 
a finite subset subject to the the hyperplane condition, the better behaved GKZ equations on $\Phi_0$ 
imply the usual GKZ equations on that function. 
The generalization presented in our work 
fits in the general context of ideas where the 
usual combinatorial framework of
toric geometry is extended 
from toric varieties and their fans to 
that of toric Deligne-Mumford stacks and stacky fans 
provided in the work of
Borisov, Chen and Smith \cite{BCS}. 

We now briefly discuss the contents of this paper.
In section \ref{sectionGKZ}, we give the precise definition
of the better behaved GKZ system. In section 
\ref{section:spacesbbgkz}, we relate
 the spaces of solutions to the logarithmic Jacobian
rings (Definition \ref{def}). In particular, we prove 
that the spaces of solutions have indeed the 
expected dimension, namely the product of 
the normalized volume of the polytope $\Delta$ and
the torsion order of the abelian group $N.$
In section \ref{section:gamma}, we investigate the effects of torsion in $N$
and repetitions among the elements $v_i$.
In section \ref{added} we study the restriction map from the solution space of GKZ 
for the cone to that for its interior. This restriction map is familiar in 
mirror symmetry. For example, in the case of the quintic hypersurface in $\PP^4,$ it amounts 
to considering the restriction map  
from the space of periods associated to the mirror Landau--Ginzburg model
of $\PP^4$ to the periods of the Calabi--Yau mirror quintic. 
The results of sections \ref{added} 
are intimately related to the work
of Batyrev \cite[section 8]{Batyrev}, 
but our treatment is more algebraic and 
self-contained.
In the last section, we discuss some variations of the main 
construction, as well as some
open problems related to it.

{\bf Acknowledgements.} Upon learning about our construction, 
in a letter to one of the authors, Alan Adolphson \cite{Adolph} 
wrote us that he obtained a similar definition for a 
generalization of the GKZ system in the case of
a lattice $N.$ Hiroshi Iritani informed us that 
in a recent preprint \cite{Iri}, the better
behaved GKZ system appears as a natural ingredient 
in his study of the quantum $D$-module associated 
to a toric complete intersection and the periods of its mirror.
We would also like to thank Vladimir Retakh
for a useful reference and the referee for the 
careful and thoughtful reading of the text. We sketch an alternative point 
of view on some of the constructions presented
in this paper in Remarks \ref{rem:ref1},  \ref{rem:ref2} and \ref{rem:ref3},
following the referee's suggestions.

\section{The usual and the better behaved versions of 
the GKZ 
hypergeometric system}\label{sectionGKZ}
 
Throughout this paper, we will use the following notations. 
We are given a finitely generated abelian group $N$, and 
an $n$-tuple $\Aa=(v_1,\ldots,v_n)$ of elements of $N$. 
We will denote by $M$ the free abelian group ${\rm Hom}(N,\ZZ)$. 
We will assume that there exists an element ${\rm deg}\in M$  
such that ${\rm deg}(v_i)=1$ for all $i$.   
We will denote by $\Delta$ the convex hull of the set of $v_i$ 
in $N\otimes \RR$ and by $K_\RR$ the cone 
$\RR_{\geq 0} \Delta$. 
We will denote by $K$ the preimage 
of $K_\RR$ in $N$ under the natural map $\pi: N\to N\otimes \RR$ 
and by $[c],$ for $c \in K,$ the corresponding elements in the 
semigroup ring $\CC[K]$ or its variants that will be  
used below. We will further assume that $\pi(v_i)$ span the 
lattice $\pi(N)$ as a group. 
The finite abelian group ${\rm tors} (N)$ is the torsion
part of $N$ and 
$|{\rm tors}(N)|$ its order. For $N$ torsion-free, 
we set $|{\rm tors}(N)|=1.$ 
 
The version of the GKZ hypergeometric  
system associated to a fixed parameter $\beta\in N\otimes \CC$
which will be the central object of study of this paper
is then defined as follows:
\begin{definition}\label{trueGKZdef} 
Consider the following system of partial differential equations 
on sequences of functions of $n$ variables 
$\big(\Phi_c(x_1, \ldots, x_n)\big)_{c \in K}:$
\begin{equation}\label{1} 
\partial_i\Phi_c=\Phi_{c+v_i},~{\rm~for~all~}c\in K,~i\in\{1,\ldots,n\} 
\end{equation} 
\begin{equation}\label{2} 
\sum_{i=1}^n \mu(v_i) x_i\partial_i \Phi_c=\mu(\beta-c) \Phi_c, 
{\rm~~for~all~}\mu\in M,~c\in K. 
\end{equation} 
We will call this system the {\em better behaved GKZ}  
and will denote it by $\GKZ(\Aa,K;\beta)$.  
\end{definition} 
In order to simplify our notation, 
we will denote a solution to the better behaved GKZ by 
$\Phi_K(x_1, \ldots, x_n)$.  

\begin{remark}\label{rem:ref1}
The solution 
$\Phi_K(x_1, \ldots, x_n)$
can 
be viewed as a function
from the  $\CC$-vector space 
$\CC^{\Aa}:= {\rm Maps} (\Aa, \CC)$  
with values in $\CC^K:= {\rm Maps} (K, \CC),$
the linear dual of the $\CC$-vector space 
$\CC[K].$
\end{remark}

\begin{remark}\label{finite} 
It is clear that one can reformulate the above system as  
a system of PDEs on a finite collection of  
functions of $(x_1,\ldots,x_n)$. Indeed, the set $K_{\rm prim}$ 
of elements $v\in K$ such that $v-v_i\not\in K$ for all $i$ 
is finite. The functions $\Phi_c$ for $c\in K_{\rm prim}$  
then determine the rest of $\Phi_c$. In fact, the number of  
PDEs can also be made finite, in view of the following. 
The relations \eqref{2} for $c\in K$, together with 
the relation \eqref{1} implies the relations \eqref{2} for $c+v_i$. 
Consequently, one only needs to use \eqref{2} for $c\in K_{\rm prim}$. The relations \eqref{1} can then be restated as 
\begin{equation}\label{3} 
\Big(\prod_{i=1}^{n} \partial_i^{k_i}\Big)\Phi_{c_1} =  
\Big(\prod_{i=1}^{n} \partial_i^{l_i}\Big)\Phi_{c_2} 
\end{equation} 
for all $k_i,l_i\in \ZZ_{\geq 0}$ such that  
$$c_1+\sum_i k_i v_i = c_2 + \sum_i l_i v_i$$ 
and $c_1,c_2\in K_{\rm prim}$.  
To see that \eqref{3} follows from a finite number 
of relations of this type, note that they correspond 
to the $\CC$-basis of the module over the polynomial  
ring $\CC[\partial_1,\ldots,\partial_n]$ which is the kernel of  
the natural map 
$$ 
\CC[K_{\rm prim}] \otimes \CC[\partial_1, \ldots, \partial_n] 
\to  
\CC[K] 
$$ 
which sends $\partial_i\to [v_i]$. Since $K_{\rm prim}$ is 
finite, this kernel is a Noetherian module, thus a finite subset of  
\eqref{3} generates the rest. 
\end{remark} 
 
\begin{remark} 
The usual GKZ hypergeometric system coincides with 
$\GKZ(\Aa,K;\beta)$ if $N$ has no torsion and $v_i$ generate $K$ 
\emph{as a semigroup}. Indeed, then $K_{\rm prim}=\{0\}$, 
\eqref{2} leads to the linear equations of \cite{GKZ} 
and \eqref{1} leads to  
$$ 
\Big(\prod_{i=1}^{n} \partial_i^{k_i}\Big)\Phi_{0} =  
\Big(\prod_{i=1}^{n} \partial_i^{l_i}\Big)\Phi_{0} 
$$ 
whenever $\sum_{i}(k_i-l_i)v_i=0$, which are the binomial 
relations of \cite{GKZ}. 
\end{remark}

\begin{remark}  
The $n$-tuple $\Aa$ of elements of $N$ is allowed to contain 
repeated elements. As one can see from the PDEs defining the  
better behaved GKZ system, the effect of having  
$v_i=v_j$ for some $i \not= j,$ is that all functions $\Phi_c$  
depend on $x_i + x_j$. 
\end{remark} 
 
\begin{example} Let $N= \ZZ \oplus \ZZ / 2\ZZ$ 
and $\Aa = (v_1, v_2),$ with  
$v_1= (1,0),$ $v_2=(1,1).$ Let $\beta$ be an 
element in $N \otimes \CC \cong \CC.$ 
The solution space 
of the better behaved GKZ system is isomorphic 
to the space of pairs of functions $\Phi_{(0,0)}(x_1,x_2), \Phi_{(0,1)}(x_1,x_2)$ 
satisfying the equations 
$$ 
\partial_1 \Phi_{(0,0)} = \partial_2 \Phi_{(0,1)}, \quad  
\partial_2 \Phi_{(0,0)} = \partial_1 \Phi_{(0,1)},$$ 
$$ 
(x_1 \partial_1 + x_2 \partial_2) \Phi_{(0,0)} = \beta \Phi_{(0,0)}, \quad 
(x_1 \partial_1 + x_2 \partial_2) \Phi_{(0,1)} = \beta \Phi_{(0,1)}. 
$$ 
The first pair of equations implies that both  
functions $\Phi_{(0,0)}$ and $\Phi_{(0,1)}$ satisfy 
the wave equation. It follows that 
$$\Phi_{(0,0)} (x_1,x_2)=a(x_1+x_2) + b(x_1 - x_2),$$$$
\Phi_{(0,1)}(x_1,x_2)= a(x_1+x_2) - b(x_1-x_2),$$ 
for some arbitrary functions $a,b.$ The second  
pair of equations implies then that 
$$ 
(x_1+x_2) a^\prime(x_1+x_2)= \beta a(x_1+x_2), \
(x_1-x_2) b^\prime(x_1-x_2)= \beta b(x_1-x_2). 
$$ 
It follows that 
$a(x_1+x_2) = A (x_1+x_2)^\beta$ and  
$b(x_1 -x_2)= B (x_1- x_2)^\beta,$ for some arbitrary complex 
constants $A,B.$ Hence the better behaved GKZ system 
has a two-dimensional solution space. Note 
that the discriminant locus of the system consists 
of the reducible curve $x_1^2- x_2^2=0$ in $\CC^2.$  
\end{example}

\begin{definition}\label{GKZ(S)} 
For any subset $S$ of $N$ which is closed under  
the additions of $v_i$ we can define the system 
$\GKZ(\Aa,S;\beta)$ as in Definition \ref{trueGKZdef}, 
but with $c\in S$ rather than $c\in K$.  
\end{definition} 
 
\begin{remark} 
If $N$ has no torsion, then  
the usual version of GKZ is equivalent to $\GKZ(\Aa,S;\beta)$ 
for $S$ the subsemigroup of $K$ generated by $v_i$. 
The fact that $\GKZ(\Aa,K;\beta)$ is better behaved than  
the usual GKZ is then related to the fact that the semigroup 
algebra $\CC[K]$ is always Cohen-Macaulay, whereas 
$\CC[S]$ need not be so. 
\end{remark}

\begin{remark}\label{rem:ref2}
In the language of Remark \ref{rem:ref1}, 
the better behaved GKZ system can be viewed as a vectorized
and more general version of the classical GKZ system. 
As such, 
the first set of equations defining the better behaved GKZ system
states that the better behaved GKZ solution
$\Phi_K$ identifies 
the linear action of the derivations with the 
semigroup action by translations by elements of
$K$ on the set $K.$ 

The second set of equations can be obtained by 
considering the action of $M \otimes \CC^{\times}$
on the vector valued function 
$(\Phi_c((x_v)_{v \in \Aa}))_{c\in K}$
as follows. 
For a chosen basis $\mu_1, \ldots, \mu_m$ 
of $M,$ and an element 
$\sum_{j=1}^m \mu_j \otimes t_j$ in $M \otimes \CC^{\times},$
the action is given by 
$$
\big( 
\sum_{j=1}^m \mu_j \otimes t_j \big) \cdot
\big(\Phi_c((x_v)_{v \in \Aa})_{c\in K}\big) :=
\big(
\prod_{j=1}^m t_j^{\mu_j (c)}
\Phi_c ( (\prod_{j=1}^m t_j^{\mu_j(v)} x_v)_{v \in \Aa})
\big)_{c \in K}.
$$
A direct calculation shows that, as in the case of the classical GKZ system, 
the second set of better behaved GKZ equations can be naturally stated 
by considering the 
infinitesimal (i.e. Lie algebra) version of this group
action. 
\end{remark}

\begin{remark}\label{rem:ref3}
One can view a solution $\Phi$ to a better behaved GKZ system as 
a function on the space of $x$ and the semigroup ring $\CC[K]$
given by 
$$
(x_1,\ldots,x_n; \sum_{c\in K} a_c[c] )\mapsto \sum_{c\in K}
a_c\Phi_c(x_1,\ldots,x_n).
$$
In these notations $\frac \partial{\partial x_i} \Phi({\bf x};g) = \Phi({\bf x};[v_i]g)$.
\end{remark}

\section{Spaces of solutions of the  
better behaved GKZ and the  
logarithmic Jacobian ring}\label{section:spacesbbgkz} 
 
Let $(x_1,\ldots,x_n) \in \CC^n.$ 
We introduce a non-degeneracy notion for a 
degree one element $f=\sum_{i=1}^n x_i [v_i]$ of $\CC[K]$ 
which is closely related to the one used 
by Batyrev \cite{Batyrev} in the non-torsion case 
(see for example \cite[Theorem 4.8]{Batyrev}).  
 
\begin{definition} The degree one element  
$f=\sum_{i=1}^n x_i [v_i]$ of $\CC[K]$ is said to be 
{\em non-degenerate} if the logarithmic derivatives 
$\sum_i x_i \mu_j(v_i)[v_i]$ form a regular sequence 
in $\CC[K]$ for a basis $\mu_j,$ $1 \leq j \leq \rk M,$ 
of $M.$ 
\end{definition} 
  
\begin{proposition} 
For a generic choice of $f=\sum_{i=1}^n x_i[v_i]$ 
and any basis $(\mu_j)$ of $M$ the  
log-derivatives $f_j=\sum_i x_i \mu_j(v_i)[v_i]$ of $f$ 
give a regular sequence in $\CC[K]$.  
Equivalently, the Koszul complex induced by the  
elements $f_j$ 
\begin{equation}\label{Koszul} 
0\to \ldots \to \wedge^2 M\otimes \CC[K]\to M\otimes \CC[K] \to\CC[K]\to R(f,K)\to 0 
\end{equation} 
is exact. 
\end{proposition}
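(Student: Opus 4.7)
The plan is to reduce the proposition to the assertion that for generic $f$, the ideal $(f_1, \ldots, f_m)$ with $m := \rk M$ cuts out a $0$-dimensional subscheme of ${\rm Spec}\,\CC[K]$, and then invoke the Cohen--Macaulay property of $\CC[K]$.

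First, I would verify that $\CC[K]$ is Cohen--Macaulay of Krull dimension $m$. Using a splitting $N \cong {\rm tors}(N) \oplus N'$ one obtains $K = {\rm tors}(N) \oplus K'$ with $K' = N' \cap K_\RR$, hence $\CC[K] \cong \CC[{\rm tors}(N)] \otimes_\CC \CC[K']$; the first factor is a product of copies of $\CC$, while $K'$ is a saturated affine semigroup, so $\CC[K']$ is Cohen--Macaulay by Hochster's theorem. The Krull dimension is $\rk N' = \rk M = m$. Since the $f_j$ are homogeneous of degree $1$ with respect to ${\rm deg}$, a standard criterion then says that $f_1,\ldots,f_m$ is a regular sequence precisely when $\CC[K]/(f_1,\ldots,f_m)$ is a finite-dimensional $\CC$-module, and this is in turn equivalent to exactness of the Koszul complex (\ref{Koszul}) in positive homological degrees.

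Next, I would stratify ${\rm Spec}\,\CC[K]$ to analyze the vanishing locus $V(f_1,\ldots,f_m)$. This scheme is a disjoint union of $|{\rm tors}(N)|$ copies of the affine toric variety ${\rm Spec}\,\CC[K']$, each stratified by torus orbits $T_F$ indexed by faces $F$ of $K_\RR$. On the orbit labeled by $(F,\chi)$, the generator $[v_i]$ restricts to $\chi(v_i^{\rm tors})\, t^{v_i'}$ when $v_i' \in F$ and vanishes otherwise, so $f_j$ restricts to the $\mu_j$-logarithmic derivative of the face polynomial $f_{F,\chi}(t) = \sum_{v_i' \in F} \chi(v_i^{\rm tors}) x_i \, t^{v_i'}$. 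The functionals $\mu_j$ vanishing on the linear span of $F$ restrict to zero and impose no condition, while the remaining $\dim F$ of them form a basis of characters of $T_F$; therefore $V(f_1,\ldots,f_m) \cap T_F$ is exactly the critical locus of $f_{F,\chi}$ on $T_F$.

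Finally, I would invoke Newton--Kushnirenko nondegeneracy: for each pair $(F,\chi)$, the set of coefficient systems for which $f_{F,\chi}$ has only isolated critical points on $T_F$ is the complement of a proper algebraic hypersurface in coefficient space. Since the map $(x_i) \mapsto \big(\sum_{i:\,v_i' = v'} \chi(v_i^{\rm tors}) x_i\big)_{v' \in F \cap \Aa}$ is linear and surjective, its preimage $U_{F,\chi} \subset \CC^n$ is Zariski-open and nonempty. Intersecting over the finitely many pairs $(F,\chi)$ yields a Zariski-open nonempty subset on which $V(f_1,\ldots,f_m)$ is $0$-dimensional on every stratum, hence globally, so the $f_j$ form a regular sequence by the Cohen--Macaulay criterion. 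The main obstacle is this last step: confirming that the nondegeneracy conditions for all faces and all torsion characters can be simultaneously imposed. This ultimately rests on the classical fact that each face's principal $A$-discriminant is a proper subvariety, and that only finitely many such hypersurfaces occur, so their union remains proper.
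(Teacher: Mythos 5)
Your proof is correct, and it takes a genuinely different route from the paper. The paper disposes of the torsion-free case in one line by citing \cite[Proposition 3.2]{Borisov}, and then reduces the torsion case to it by observing that $\CC[K]$ splits as a direct sum of $|{\rm tors}(N)|$ copies of $\CC[\pi(K)]$, one for each character of the torsion subgroup. You instead prove the statement from scratch: Hochster's theorem gives Cohen--Macaulayness of $\CC[K']$; the unmixedness property of Cohen--Macaulay graded rings converts ``$f_1,\ldots,f_m$ is a regular sequence'' into the geometric condition $\dim V(f_1,\ldots,f_m)=0$; and the orbit-cone stratification (with an extra index running over the characters of ${\rm tors}(N)$) identifies $V(f_1,\ldots,f_m)$ on each stratum with the logarithmic critical locus of the corresponding face polynomial $f_{F,\chi}$. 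Genericity is then nonvanishing of finitely many principal $A$-discriminants, pulled back under the surjective linear maps $(x_i)\mapsto (z_{v'})$. What this buys is a unified and essentially self-contained treatment of torsion and of repeated $v_i$'s in a single stratification, at the cost of importing Hochster's theorem and the Gelfand--Kapranov--Zelevinsky theory of discriminants rather than citing the earlier paper; the structure of the argument is close to what underlies \cite[Theorem 4.8]{Batyrev} and \cite[Proposition 3.2]{Borisov}, but you have spelled it out rather than delegated it. One point worth stating more sharply: for a face $F$ with $\dim F\geq 1$, ``isolated critical points'' of $f_{F,\chi}$ on $T_F$ is automatically the same as \emph{no} critical points, because the quasi-homogeneity of $f_{F,\chi}$ under the one-parameter subgroup dual to $\deg$ forces the critical locus to be invariant under a free $\CC^\times$-action and hence to have dimension $\geq 1$ whenever nonempty. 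Thus the only stratum that $V(f_1,\ldots,f_m)$ is allowed to (and must) meet is the collection of torus-fixed points, and your condition delivers exactly that, though the wording ``isolated'' slightly obscures it.
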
 
 
\begin{proof} 
If $N$ has no torsion, the result is 
\cite[Proposition 3.2]{Borisov}.  
The  
Koszul complex reformulation is standard.
If $N$ has torsion, the result appears to be new,  
but perhaps not particularly unexpected.  
In order to prove it, note that the ring
$\CC[K]$ is the direct sum of
$|{\rm tors} N|$ copies of 
$\CC[\pi(K)],$
where $\pi : K \to K \otimes \RR$ is the
natural map. Then
the regularity of the sequence
needs to be checked at each individual
copy of $\CC[\pi(K)]$ where it
follows again from the non-torsion result.  
\end{proof} 
 
\begin{definition} \label{def}
The ring $R(f,K),$ defined 
by the Koszul complex of the 
previous proposition, 
is called the {\em logarithmic Jacobian  
ring} associated to $f$ and $K.$ 
\end{definition} 
 
\begin{corollary} 
The dimension of the $\CC$-vector space  
$R(f,K)$ 
is equal to $vol(\Delta) \cdot | {\rm tors}(N)|,$ 
where $vol(\Delta)$ is the  
normalized volume of the polytope $\Delta$ in 
$N \otimes \RR,$ and $|{\rm tors}(N)|$ is the order of 
the torsion part of $N.$
\end{corollary}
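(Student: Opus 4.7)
The plan is to compute $\dim_\CC R(f,K)$ via its Hilbert series with respect to the grading on $\CC[K]$ induced by ${\rm deg}\colon N\to\ZZ$. Since the log-derivatives $f_1,\ldots,f_d$ (with $d=\rk M$) are each homogeneous of degree one and form a regular sequence by the preceding proposition, the Koszul complex \eqref{Koszul} is a graded free resolution of $R(f,K)$. The standard Euler-characteristic computation then yields
$$
H_{R(f,K)}(t) \;=\; (1-t)^d \, H_{\CC[K]}(t).
$$
Because $\CC[K]$ is Cohen--Macaulay of Krull dimension $d$ and we are killing by a regular sequence of length $d$, the quotient $R(f,K)$ is a finite-dimensional graded $\CC$-vector space and $\dim_\CC R(f,K) = H_{R(f,K)}(1)$.

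Next I would separate the torsion. The splitting $N\cong \pi(N)\oplus {\rm tors}(N)$ gives a degree-preserving bijection $K\cong \pi(K)\times {\rm tors}(N)$ (since ${\rm tors}(N)$ lies in the degree-zero part of $N$ under ${\rm deg}$), and hence
$$
H_{\CC[K]}(t) \;=\; |{\rm tors}(N)|\cdot H_{\CC[\pi(K)]}(t).
$$
Now $\pi(K)$ is the semigroup of lattice points of the rational pointed cone $K_{\RR}$ of dimension $d$ inside $\pi(N)\otimes\RR$, with every primitive ray generator at ${\rm deg}=1$. Standard Ehrhart theory gives $H_{\CC[\pi(K)]}(t) = Q(t)/(1-t)^d$ for some polynomial $Q(t)\in\ZZ[t]$ with $Q(1)=vol(\Delta)$, the normalized volume of the cross-section $\Delta$ at ${\rm deg}=1$.

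Combining the two identities, $H_{R(f,K)}(t) = |{\rm tors}(N)| \cdot Q(t)$, and evaluating at $t=1$ gives
$$
\dim_\CC R(f,K) \;=\; |{\rm tors}(N)|\cdot Q(1) \;=\; |{\rm tors}(N)|\cdot vol(\Delta),
$$
as required. The only step beyond formal manipulation is the Ehrhart identity $Q(1)=vol(\Delta)$; this is the main (and entirely standard) point, and it follows by matching the leading term of the Hilbert polynomial $k\mapsto \dim \CC[\pi(K)]_k$ against the asymptotic lattice-point count in the $(d-1)$-dimensional slice $k\Delta$, namely $k^{d-1}vol_{d-1}(\Delta)+O(k^{d-2})$, where $vol(\Delta)=(d-1)!\,vol_{d-1}(\Delta)$ is the normalized volume.
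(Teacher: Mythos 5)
Your argument is correct and is essentially the paper's own proof, just unpacked: the paper phrases the same computation directly in terms of the leading coefficient of the Hilbert polynomial of $\CC[\pi(K)]$ (stating that killing by a regular sequence of $\rk N$ linear forms picks out $(\rk N-1)!$ times that leading coefficient, and that this equals $vol(\Delta)/(\rk N-1)!$), together with the same observation that $\CC[K]$ splits as $|{\rm tors}(N)|$ graded copies of $\CC[\pi(K)]$. Your Hilbert-series reformulation $H_{R(f,K)}(t)=(1-t)^{\rk M}H_{\CC[K]}(t)$ and the Ehrhart identity $Q(1)=vol(\Delta)$ are standardly equivalent to that statement, so there is no substantive difference in approach.
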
 
 
\begin{proof} 
The dimension of the  $\CC$-vector space  
$R(f,K)$ is equal to the product of 
$(\rk N -1)! \cdot |{\rm tors}(N)|$ and 
the leading coefficient of the  
Hilbert polynomial of the graded ring 
$\CC[K \otimes \ZZ].$ But it is well 
known that this leading coefficient 
is the quotient of the normalized 
volume of $\Delta$ by $(\rk N -1)!.$  
\end{proof}

The complex \eqref{Koszul} is graded with finite-dimensional 
graded components. We can dualize it component-wise
to get another graded exact complex with
finite-dimensional graded components
\begin{equation}\label{Kdual} 
0\to R(f,K)^\vee \to \CC[K] \to N\otimes \CC[K] \to \wedge^2N\otimes \CC[K] 
\to \ldots \to 0 .
\end{equation} 
We will naturally identify the graded dual of $\CC[K]$ with 
itself, since each graded component of  
$\CC[K]$ has a natural basis. 
 
The complex \eqref{Kdual} allows us to give the following  
description of the vector space $R(f,K)^\vee$. 
\begin{proposition} 
The space $R(f,K)^\vee$ is the set of elements 
$\sum_{c \in K} \lambda_c [c]$ in  
$\CC[K]$ such that 
the linear equations in $N \otimes \CC$  
\begin{equation}\label{rfkdual} 
\sum_{i=1}^n x_i\lambda_{c+v_i} v_i = 0 
\end{equation} 
hold for all $c\in K$.
\end{proposition}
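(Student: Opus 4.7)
The plan is to compute the first nontrivial map of the dualized complex (\ref{Kdual}) in the basis $\{[c]\}_{c\in K}$ and read off the kernel condition. The rightmost portion of (\ref{Koszul}) reads
$$M\otimes \CC[K]\xrightarrow{\delta}\CC[K]\to R(f,K)\to 0,\qquad \delta(\mu\otimes g)=f_\mu\cdot g,$$
where $f_\mu:=\sum_i x_i\mu(v_i)[v_i]$. Graded-dualizing turns this into
$$0\to R(f,K)^\vee\to \CC[K]^\vee\xrightarrow{\delta^\vee}(M\otimes\CC[K])^\vee,$$
so $R(f,K)^\vee=\ker\delta^\vee$, and everything reduces to writing $\delta^\vee$ explicitly.

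Next I would fix the identifications. The basis $\{[c]\}$ identifies $\CC[K]^\vee$ with $\CC[K]$, as the paper already does. Under the same convention $(M\otimes \CC[K])^\vee$ becomes $M^\vee\otimes \CC[K]$; since each pairing $v\mapsto\mu(v)$ factors through $N/{\rm tors}(N)$, tensoring with $\CC$ yields the canonical identification $M^\vee\otimes\CC[K]\cong N\otimes\CC[K]$ used in (\ref{Kdual}).

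The pairing computation is then one line: for $\lambda=\sum_c\lambda_c[c]$ and $\mu\otimes[c']$,
$$\langle\lambda,\delta(\mu\otimes[c'])\rangle=\Big\langle\lambda,\sum_i x_i\mu(v_i)[c'+v_i]\Big\rangle=\mu\Big(\sum_{i=1}^n x_i\lambda_{c'+v_i}\,v_i\Big).$$
Writing $\delta^\vee(\lambda)=\sum_{c'}\alpha_{c'}\otimes[c']$ with $\alpha_{c'}\in N\otimes\CC$, and using that $N\otimes\CC$ is torsion-free (so that the collection $\mu(\alpha_{c'})$ for all $\mu\in M$ determines $\alpha_{c'}$), this forces $\alpha_{c'}=\sum_{i=1}^n x_i\lambda_{c'+v_i}\,v_i$. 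Hence $\lambda\in R(f,K)^\vee$ iff $\alpha_{c'}=0$ for every $c'\in K$, which is exactly (\ref{rfkdual}). The only point demanding care is the torsion bookkeeping between $N$, $M$, and $N\otimes\CC$, handled by the observation that $M$-valued pairings kill torsion in a way consistent with the torsion-freeness of $N\otimes\CC$; otherwise the argument is a direct unpacking of the dualized Koszul complex and presents no real obstacle.
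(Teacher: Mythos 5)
Your proof is correct and follows essentially the same route as the paper: both dualize the tail of the Koszul complex and identify the dual of the multiplication map $M\otimes\CC[K]\to\CC[K]$ with the explicit map $\CC[K]\to N\otimes\CC[K]$ sending $\sum_c\lambda_c[c]$ to $\sum_c\sum_i x_i\lambda_{c+v_i}v_i\otimes[c]$. The paper simply states this dual map; you additionally write out the pairing computation and the torsion bookkeeping, but there is no substantive difference in approach.
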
 
 
\begin{proof} 
The result follows from the observation that  
the dual of the map $M\otimes \CC[K] \to\CC[K]$ in the  
Koszul complex \eqref{Koszul} is the map  
$\CC[K] \to N\otimes \CC[K]$ in the dual 
complex \eqref{Kdual} 
given by 
$$ 
\sum_{c \in K} \lambda_c [c] \mapsto  
\sum_{c \in K} \sum_{i=1}^n x_i \lambda_{c+v_i} v_i \otimes [c]. 
$$ 
\end{proof} 
 
\begin{remark}\label{rmk} 
Note that equations (\ref{rfkdual}) can be solved degree-by-degree 
and will have no nontrivial solutions for $\deg(c)>\rk N$. 
Indeed, the exactness of the complex \eqref{Kdual} 
implies that the Hilbert-Poincar\'{e} series  
of the kernel of the map $\CC[K] \to N\otimes \CC[K]$ 
is a polynomial of degree at most $\rk N.$ 
\end{remark} 
 
Let us now consider the  
solutions to $\GKZ(\Aa,K;\beta)$. 
\begin{theorem}\label{key} 
The space of analytic
solutions to $\GKZ(\Aa,K;\beta)$ in a 
neighborhood of a generic $f$ is isomorphic  
to the space of elements 
$(\lambda_c)_{c \in K}$ 
in $\CC^K= {\rm Maps} (K, \CC)$ 
such that 
the linear equations in $N \otimes \CC$  
\begin{equation}\label{GKZlambdas} 
\sum_{i=1}^n x_i\lambda_{c+v_i} v_i = \lambda_c(\beta-c) 
\end{equation} 
hold for all $c \in K.$ 
\end{theorem}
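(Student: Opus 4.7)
The plan is to realize the isomorphism as evaluation at a fixed generic basepoint $f=(x_1^0,\ldots,x_n^0)$, sending a solution $(\Phi_c)_{c\in K}$ to $\lambda_c:=\Phi_c(f)$. To verify that this map lands in the space described by \eqref{GKZlambdas}, I would first use \eqref{1} to rewrite \eqref{2} as
$$\sum_{i=1}^n\mu(v_i)\,x_i\,\Phi_{c+v_i}=\mu(\beta-c)\,\Phi_c,\qquad \mu\in M,\ c\in K,$$
and then evaluate at $f$. Letting $\mu$ range over a basis of $M$ repackages these scalar identities as the single vector equation in $N\otimes\CC$ that is \eqref{GKZlambdas}.

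For injectivity I would use analyticity. Iterating \eqref{1} shows that every mixed partial $\partial^\alpha \Phi_c$ equals $\Phi_{c+\sum_i\alpha_iv_i}$, so each Taylor coefficient of each $\Phi_c$ at $f$ is a value of $\lambda$; consequently $\lambda\equiv 0$ forces $\Phi\equiv 0$ on a neighborhood of $f$. Consistency across different factorizations of $\partial^\alpha$ is automatic since the $\partial_i$ commute and $\sum_i\alpha_iv_i$ is a well-defined element of $K$.

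For surjectivity, given $\lambda$ satisfying \eqref{GKZlambdas} I would reconstruct $\Phi$ through the ansatz
$$\Phi_c(f+y)\;:=\;\sum_{\alpha\in\ZZ^n_{\geq 0}}\lambda_{c+\sum_i\alpha_iv_i}\,\frac{y^\alpha}{\alpha!}.$$
A shift of summation index verifies \eqref{1} formally. For \eqref{2}, one splits $x_i=x_i^0+y_i$, applies \eqref{GKZlambdas} to each shifted index $c+\sum_i\alpha_iv_i$ to dispose of the $x_i^0$ contributions, and observes that the resulting $\sum_j\alpha_j\mu(v_j)$ correction cancels against the $y_i\partial_i$ terms after a reindexing.

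The main obstacle is convergence of this power series. I would handle this by leveraging Remark \ref{finite}: the better behaved GKZ system is equivalent to a finite collection of linear PDEs with polynomial coefficients in the finite family $(\Phi_c)_{c\in K_{\rm prim}}$, whose local analytic solution space at a generic $f$ is finite-dimensional, and the evaluation injection above embeds it into the (likewise finite-dimensional) $\lambda$-space. Surjectivity would then follow from a matching dimension count: both sides should have dimension $\mathrm{vol}(\Delta)\cdot|{\rm tors}(N)|$ for generic $\beta$, which on the solution side can be established by an explicit $\Gamma$-series construction (as taken up in section \ref{section:gamma}), and on the $\lambda$ side by a Koszul argument parallel to the one already carried out for $R(f,K)^\dual$.
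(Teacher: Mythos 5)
Your evaluation map, the injectivity argument via Taylor's formula, and the reconstruction ansatz all coincide with the paper's proof. The genuine divergence --- and the genuine gap --- is in how you propose to handle surjectivity, i.e.\ convergence of the reconstructed Taylor series, which is the heart of the theorem.

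You outsource convergence to a dimension count, which requires exhibiting $\mathrm{vol}(\Delta)\cdot|\mathrm{tors}(N)|$ linearly independent analytic solutions. You attribute this to ``an explicit $\Gamma$-series construction (as taken up in section \ref{section:gamma}),'' but that section contains no such thing: it shows only how to lift solutions from $\pi(N)$ to $N$, presupposing a supply of solutions in the torsion-free case, and no $\Gamma$-series appears anywhere in the paper (the whole point being to avoid them). Building a convergent $\Gamma$-series basis in this generality --- with torsion, repeated $v_i$, and resonant $\beta$ --- would be a substantial project; you yourself retreat to ``generic $\beta$,'' which already falls short of Theorem \ref{key}, and the absence of rank-jumping at special $\beta$ is precisely what ``better behaved'' means. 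A second soft spot: the claim that the $\lambda$-space has dimension $\mathrm{vol}(\Delta)\cdot|\mathrm{tors}(N)|$ is Theorem \ref{main}, proved \emph{after} Theorem \ref{key}; the ``Koszul argument parallel to that for $R(f,K)^\vee$'' is really the degree-by-degree recursive solvability of \eqref{GKZlambdas} and would have to be spelled out here rather than cited. Finally, a finite system of linear PDEs with polynomial coefficients does \emph{not} automatically have finite-dimensional local solution space (consider $\partial_1\Phi=0$ in two variables); finiteness here comes precisely from the injection into the $\lambda$-space, and you should say so rather than assert it.

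The paper's proof replaces all of this by a direct, self-contained estimate that works for every $\beta$. Set $\Lambda_k:=\max_{\deg d=k}|\lambda_d|$ and show $\Lambda_{k+1}\leq C\,k\,\Lambda_k$ for $k$ large: non-degeneracy of $f$ forces the ideal $I$ generated by the log-derivatives to contain every $[d]$ with $\deg d=\rk N+1$, so each $[d_1]$ of degree $k+1$ can be written as $\sum_{i,j,d_3}\beta_{d_3,j}\,x_i\mu_j(v_i)\,[d_3+v_i]$ with $\sum|\beta_{d_3,j}|$ bounded independently of $d_1$; then \eqref{GKZlambdas} gives $\lambda_{d_1}=\sum_{d_3,j}\beta_{d_3,j}\lambda_{d_3}\mu_j(\beta-d_3)$, and $|\mu_j(\beta-d_3)|$ grows linearly in $k$. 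This yields $|\lambda_d|\leq C^{\deg d}(\deg d)!$, which, combined with the multinomial bound $\tfrac{(\sum l_i)!}{\prod l_i!}\leq n^{\sum l_i}$, makes the series \eqref{Taylor} converge in a polydisc of positive radius. You need an estimate of this kind to close the argument; the dimension count you propose cannot substitute for it without supplying the unproved $\Gamma$-series input.
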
 
 
\begin{proof} 
In one direction, if we have a solution $(\Phi_c), c\in K$,  
then $\lambda_c=\Phi_c(x_1,\ldots, x_n)$ clearly satisfies  
\eqref{GKZlambdas}. In fact, this map from the space of 
solutions of $\GKZ(\Aa,K;\beta)$ to the space of solutions 
of \eqref{GKZlambdas} is clearly injective in view of
Taylor's formula, since knowing  
all $\Phi_c(x_1,\ldots, x_n)$ implies the knowledge of 
all the partial derivatives of all $\Phi_c$ at $(x_1,\ldots,x_n)$ 
in view of equation \eqref{1}. 
 
In the other direction,  
suppose that we have a solution $(\lambda_c)$ of 
\eqref{GKZlambdas}. Then equation \eqref{1} and 
Taylor formula force us 
to have  
\begin{equation}\label{Taylor} 
\Phi_c(z_1,\ldots,z_n) = \sum_{(l_1,\ldots,l_n)\in \ZZ_{\geq 0}^n} 
\lambda_{c+\sum_i l_i v_i} 
\prod_{i=1}^n \frac {(z_i-x_i)^{l_i}}{l_i!} 
\end{equation} 
for all $c\in K$. 
It remains to show that the above series converges 
absolutely and uniformly in $c\in K$ and ${\bf z}$ in a neighborhood of $(x_1,\ldots, x_n)$. Observe that it suffices to show uniform convergence for a  
fixed $c\in K_{\rm prim}$, since the  
partial derivative of a Taylor series will converge in  
the same neighborhood and $K_{\rm prim}\subset K$ 
is a finite set. From now on we fix $c=c_0$. 
 
We claim that there exists a constant $C_1 \in \RR$ 
such that  
\begin{equation}\label{estimate} 
\vert \lambda_{c_0+\sum_{i}l_i v_i}\vert \leq  
C_1^{(\sum_{i=1}^n l_i)} 
(\sum_{i=1}^n{l_i})! 
\end{equation} 
for all nonzero $(l_1,\ldots,l_n)\in \ZZ_{\geq 0}^n$. This is  
easily seen to be equivalent to the existence of a constant 
$C_2 \in \RR$ such that  
\begin{equation}\label{estimate2} 
\vert\lambda_{d}\vert \leq C_2^{\deg d} (\deg d)! 
\end{equation} 
for all $d$ with sufficiently high $\deg d$. 
 
Define $\Lambda_k=\displaystyle\max_{d,\deg d=k}\vert \lambda_d\vert$. 
To prove \eqref{estimate2} it suffices to show that there exists $C_3\in \RR$ such that $\Lambda_{k+1}\leq C_3k\Lambda_k$ for all sufficiently large $k$.  
 
The ideal $I$ of $\CC[K]$ generated by logarithmic derivatives 
of $f$ contains $[d]$ for all $d$ of $\deg d=\rk N +1$. It is easy 
to see that every $d_1$ of sufficiently high degree can be  
written as $d_1=d+d_2$ with $d,d_2\in K$ and $\deg d=\rk N+1$. 
We can write each $[d]$ of degree $\rk N+1$ as 
$$ 
[d] = \sum_{i=1}^n\sum_{j=1}^{\rk N} 
 x_i \mu_j(v_i) [v_i] t_{d,j} 
$$ 
for some $t_{d,j}\in \CC[K]_{\deg = \rk N}$ and some 
basis $(\mu_1,\ldots, \mu_{\rk N})$ of $M$.  
Consequently, for 
each  $d_1$ of sufficiently high degree we have  
that 
$$ 
[d_1] =  \sum_{i=1}^n\sum_{j=1}^{\rk N} 
 x_i \mu_j(v_i) [v_i] t_{d,j}[d_2] 
$$ 
for some $d_2$. By considering the maximum size of  
the coefficients of $t_{d,j}$ we observe 
that, for $\deg d_1=k+1,$  
$$ 
[d_1] = \sum_{i=1}^n\sum_{j=1}^{\rk N} 
  \sum_{d_3,\deg d_3= k} \beta_{d_3,j}  
 x_i\mu_j(v_i) [d_3+v_i] 
$$ 
with $\sum_{d_3,j}\vert\beta_{d_3,j}\vert$ bounded by a constant independent of  
$d_1$ and $k$.  
Equation \eqref{GKZlambdas} implies then that 
$$\sum_{d_3,j} \beta_{d_3,j} \lambda_{d_3}\mu_j(\beta-d_3) 
=\sum_{i,d_3,j} \beta_{d_3,j} x_i\lambda_{d_3+v_i} \mu_j(v_i) =  
\lambda_{d_1}.$$ 
Since $\sum_{d_3,j}\vert\beta_{d_3,j}\vert$ is bounded by  
a constant, $\vert \lambda_{d_3}\vert$ is bounded by $\Lambda_k$  
and $\mu(\beta-d_3)$ is bounded by a constant  
times $k$, we get $\vert \lambda_{d_1}\vert \leq C_3k\Lambda_k$ 
as required.  
 
This allows us to establish estimates \eqref{estimate2} and 
\eqref{estimate}.  
Since the multinomial coefficients 
$$\frac {(\sum_{i=1}^n l_i)!}{\prod_{i=1}^n l_i!}$$ 
are bounded by $n^{\sum_{i=1}^n l_i}$, the terms of the series 
\eqref{Taylor}  
are bounded by $\prod_{i=1}^n (nC_1)^{l_i}\vert z_i-x_i\vert ^{l_i}$. 
By making $\vert z_i-x_i\vert $ sufficiently small, the absolute 
convergence is obtained by comparing to a product of convergent 
geometric series. 
\end{proof} 
 
Having identified the space of solutions of $\GKZ(\Aa,K;\beta)$ 
in a neighborhood of $(x_1,\ldots, x_n)$  
with the space of solutions of the equations \eqref{GKZlambdas}, we can 
now consider a natural filtration on it. 
We define the subspaces $F_k$ of the  
space of solutions of $\GKZ(\Aa,K;\beta)$ 
in a neighborhood of a generic $(x_1,\ldots, x_n)$  
to be characterized by the fact that 
$\Phi_c(x_1,\ldots,x_n)=0$ for all $c$ with $\deg c <k$. 
We have that $F_0\supseteq F_1\supseteq F_2 \supseteq \cdots$. 
 
\begin{theorem}\label{main} 
The quotient $F_{k}/F_{k+1}$ is naturally isomorphic to 
the dual of the degree $k$ component of $R(f,K)$. 
\end{theorem}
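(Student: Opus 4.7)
The plan is to realize the isomorphism explicitly as the map $F_k/F_{k+1} \to R(f,K)^\vee_k$ sending a solution $(\Phi_c)$ to the collection $(\Phi_c(x_1,\ldots,x_n))_{\deg c = k}$, viewed inside $\CC[K]$ in degree $k$. Well-definedness and injectivity follow immediately from Theorem \ref{key} and the definition of $F_k$; the main task is surjectivity, which I would carry out by an inductive lifting argument through the dual Koszul complex \eqref{Kdual}.

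First I would verify well-definedness. By Theorem \ref{key}, a solution $(\Phi_c) \in F_k$ corresponds to a sequence $(\lambda_c)_{c \in K}$ satisfying \eqref{GKZlambdas} with $\lambda_c = 0$ for $\deg c < k$. Evaluating \eqref{GKZlambdas} at $c$ with $\deg c = k-1$ then yields $\sum_i x_i \lambda_{c+v_i} v_i = 0$, which is precisely the defining condition \eqref{rfkdual} for $\sum_{\deg d = k} \lambda_d[d]$ to lie in $R(f,K)^\vee$ in degree $k$. The kernel of the resulting map is $F_{k+1}$ by construction.

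For surjectivity, given $(\lambda_d)_{\deg d = k} \in R(f,K)^\vee_k$, I would extend inductively, declaring $\lambda_d = 0$ for $\deg d < k$, by producing $(\lambda_d)_{\deg d = l+1}$ from $(\lambda_d)_{\deg d \le l}$ for each $l \ge k$ so that \eqref{GKZlambdas} holds at every $c$ with $\deg c = l$. Writing $\Lambda_{l+1} = \sum_{\deg d = l+1} \lambda_d[d]$ and $\Gamma_l = \sum_{\deg c = l} \lambda_c(\beta - c) \otimes [c]$, this is the equation $\alpha(\Lambda_{l+1}) = \Gamma_l$, where $\alpha\colon \CC[K]_{l+1} \to N \otimes \CC[K]_l$ is the graded piece of the second map in \eqref{Kdual}. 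By exactness of \eqref{Kdual}, a lift exists if and only if $\Gamma_l$ lies in the kernel of the next map $N \otimes \CC[K]_l \to \wedge^2 N \otimes \CC[K]_{l-1}$.

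The main obstacle is verifying this consistency. Unwinding duality, the next map is dual to $\mu \wedge \nu \otimes [c'] \mapsto \nu \otimes f_\mu[c'] - \mu \otimes f_\nu[c']$, and pairing its value on $\Gamma_l$ against $\mu \wedge \nu \otimes [c']$ with $\deg c' = l-1$ gives, after the $\mu(v_i)\nu(v_i)$ cross terms cancel,
\[
\nu(\beta - c')\,\mu\!\Bigl(\sum_i x_i \lambda_{c'+v_i} v_i\Bigr) - \mu(\beta - c')\,\nu\!\Bigl(\sum_i x_i \lambda_{c'+v_i} v_i\Bigr).
\]
By the inductive hypothesis applied at $c'$, $\sum_i x_i \lambda_{c'+v_i} v_i = \lambda_{c'}(\beta - c')$, so this collapses to $\lambda_{c'}$ times an antisymmetric pairing of $\mu$ and $\nu$ against the single element $\beta - c' \in N \otimes \CC$, which vanishes. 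Hence the lift exists at every step; the ambiguity is a copy of $R(f,K)^\vee_{l+1}$ (zero once $l+1 > \rk N$ by Remark \ref{rmk}), and any choice is admissible. Finally, assembling $(\lambda_c)$ into $(\Phi_c)$ via \eqref{Taylor}, absolute convergence in a neighborhood of $(x_1,\ldots,x_n)$ follows verbatim from the estimates \eqref{estimate}--\eqref{estimate2} in the proof of Theorem \ref{key}, since those used only \eqref{GKZlambdas}. The resulting $(\Phi_c)$ lies in $F_k$ and restricts to the prescribed class in $R(f,K)^\vee_k$, establishing the natural isomorphism.
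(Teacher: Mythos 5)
Your proposal is correct and takes essentially the same route as the paper: both realize the isomorphism by identifying $F_k/F_{k+1}$ with the degree-$k$ piece of $R(f,K)^\vee$ via the evaluation $(\Phi_c)\mapsto(\Phi_c(x))_{\deg c=k}$, and both establish surjectivity by recursively solving \eqref{GKZlambdas} degree by degree using exactness of \eqref{Kdual}, with the consistency obstruction vanishing because $(\beta-c')\wedge(\beta-c')=0$. Your write-up is somewhat more explicit than the paper's (which leaves the well-definedness and the direction of the map implicit), but the key lemma, the Koszul exactness trick, and the wedge-vanishing computation are identical.
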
 
 
\begin{proof} 
The essential observation is that the equations \eqref{GKZlambdas} 
satisfied by the elements $\lambda_c$  
can be solved recursively in the degree of $c.$ 
Indeed, suppose that we have 
found $\lambda_d, \deg d\leq k,$ which satisfy  
\eqref{GKZlambdas} for all $c$, $\deg c\leq k-1$. 
In order to check that a solution exists for all $d$ of degree $k+1$, 
we need to check that  
$\sum_{c,\deg c=k} \lambda_c(\beta - c)[c]$ sits in the  
degree $k$ component of the  
image of the map 
$\CC[K]\to N\otimes \CC[K]$
of the complex \eqref{Kdual}. Since this is an exact complex, 
it suffices to check that it is in the kernel of the map 
$$ 
N\otimes \CC[K] \to \wedge^2N \otimes \CC[K]
$$
 of \eqref{Kdual}. The coefficient of its image at $[c_1]$ 
is given by the element of $\wedge^2N\otimes \CC[K]$ 
$$ 
\sum_{i=1}^n x_i\lambda_{c_1+v_i} v_i\wedge(\beta-c_1) 
=\lambda_{c_1}(\beta-c_1)\wedge(\beta-c_1)=0. 
$$ 
 
Observe that as we are solving recursively the  
equations \eqref{GKZlambdas}, the ambiguity at each 
step is precisely an element of the corresponding component 
of $R(f,K)^\vee$, which leads to the result. 
\end{proof} 
 
\begin{corollary} 
The space of solutions to the better behaved GKZ system is  
of the same dimension as $R(f,K)$.   
\end{corollary}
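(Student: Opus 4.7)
The plan is to read the dimension of the solution space off the filtration $F_0\supseteq F_1\supseteq F_2\supseteq\cdots$ introduced just before Theorem~\ref{main}, using the successive quotient formula of that theorem. First I would note that $F_0$ is all of the solution space, since the defining condition ``$\Phi_c(x_1,\ldots,x_n)=0$ for all $c$ with $\deg c<0$'' is vacuous. Then by Theorem~\ref{main}, each successive quotient $F_k/F_{k+1}$ is identified with the dual of the degree $k$ component of the logarithmic Jacobian ring $R(f,K)$.

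Next I would show the filtration stabilizes in finitely many steps. Since the Koszul complex \eqref{Koszul} realizes $R(f,K)$ as a finite-dimensional graded ring (cf.\ Remark~\ref{rmk} applied to the dual complex \eqref{Kdual}), one has $R(f,K)_k=0$ for $k$ greater than some bound (at most $\rk N$). Therefore $F_k/F_{k+1}=0$ for all sufficiently large $k$, which gives $F_k=F_{k+1}=\cdots$ for $k$ past this bound.

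The main obstacle, and the step that needs a genuine argument, is to show that this stable value $\bigcap_k F_k$ is the zero subspace. For this I would invoke Theorem~\ref{key}: a solution in $\bigcap_k F_k$ corresponds to a sequence $(\lambda_c)_{c\in K}$ with $\lambda_c=\Phi_c(x_1,\ldots,x_n)=0$ for every $c\in K$; the explicit Taylor expansion \eqref{Taylor} then forces $\Phi_c\equiv 0$ identically for every $c$. Combined with the stabilization, this yields $F_k=0$ for $k$ beyond the bound above.

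With $F_0$ the full solution space and $F_k=0$ for $k\gg 0$, the filtration is finite and its associated graded is exactly the dual of $R(f,K)$. Summing dimensions of the successive quotients then gives
\begin{equation*}
\dim(\text{solution space})=\dim F_0=\sum_{k\ge 0}\dim F_k/F_{k+1}=\sum_{k\ge 0}\dim R(f,K)_k^{\dual}=\dim R(f,K),
\end{equation*}
which is the corollary. Everything aside from the vanishing $\bigcap_k F_k=0$ is formal bookkeeping on the filtration and direct quotations of Theorems~\ref{key} and~\ref{main}.
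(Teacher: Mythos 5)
Your argument is correct and is precisely what the paper treats as immediate from Theorem~\ref{main}: you have made explicit the finite exhaustive filtration bookkeeping (that $F_0$ is the whole space, that $F_k$ stabilizes at $0$ by Remark~\ref{rmk} and the injectivity built into Theorem~\ref{key}), and summed dimensions of the graded pieces. This matches the paper's intended reasoning, just spelled out in full.
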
 
 
\begin{remark}\label{same}
The same argument applies with obvious modifications 
when one replaces $K$ by its interior $K^\circ$. 
\end{remark} 
 
\begin{remark} 
The argument of this section is likely philosophically the  
same as the general arguments used in the theory of holonomic 
$D$-modules, but it has an advantage of being self-contained. 
\end{remark}

\section{Effects of torsion and repetitions} \label{section:gamma}
In this section we analyze the role played by the torsion part of the finitely
generated abelian group $N$ and by the possible repetitions that 
may appear in the $n$-tuple $\mathcal A.$
Let $\{ w_1, \ldots, w_m \} \subset N \otimes \RR$ be the set consisting
of the elements $\pi (v_i), 1 \leq i \leq n,$ in $N \otimes \RR$ where $\pi : N \to N \otimes \RR$
is the natural map. For each $j, 1 \leq j \leq m,$ let $I_j$ be the set of 
indices $i$ with $\pi(v_i)=w_j.$

Let $\rho : N \to \CC^{\times}$ be a multiplicative group character.
Define the map $p_{\rho} : \CC^n \to \CC^m$ by
\begin{equation}\label{def:mapchar}
p_\rho(x_1, \ldots, x_n):= (\sum_{i \in I_1} \rho(v_i) x_i, \ldots,
\sum_{i \in I_m} \rho(v_i) x_i).
\end{equation}
To a sequence of functions $(\Psi_{w}(z_1, \ldots, z_m))_{w \in \pi(K)},$
we associate a sequence of functions $(\Phi_c (x_1, \ldots, x_n))_{c \in K}$
such that, for any $c \in K,$ 
\begin{equation}\label{eq:torext}
\Phi_c (x_1, \ldots, x_n):=
\rho(c) \Psi_{\pi(c)} (p_{\rho}(x_1, \ldots, x_n)).
\end{equation}
What makes this definition useful is the following result. 

\begin{proposition} 
For any character $\rho \in {\rm Hom} (N, \CC^{\times}),$
if the function
$$
\Psi_{\pi(K)} (z_1, \ldots, z_m)=
\big(\Psi_w (z_1, \ldots,z_m)\big)_{w \in p(K)} 
$$
is a solution on an open set $U \subset \CC^m$
to the 
better behaved GKZ associated to $\beta \in N \otimes \CC$
defined by $\{ w_1, \ldots, w_m \}$ in $\pi(N),$
then the 
associated 
function 
$$\Phi_K (x_1, \ldots, x_n)=
\big(\Phi_c(x_1, \ldots, x_n)\big)_{c \in K}$$
is a solution on the open set $p^{-1}_\rho (U) \subset \CC^n$
to the
better behaved GKZ 
associated to $\beta \in N \otimes \CC$
defined by $(v_1, \ldots, v_n)$ in $N.$ 
\end{proposition}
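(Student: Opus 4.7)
The plan is to verify directly that the sequence $(\Phi_c)_{c \in K}$ defined by \eqref{eq:torext} satisfies both families of equations of Definition \ref{trueGKZdef} for the datum $(v_1, \ldots, v_n)$ in $N$, using that $(\Psi_w)_{w \in \pi(K)}$ satisfies the analogous equations for $(w_1, \ldots, w_m)$ in $\pi(N)$. The starting observation is that $M = {\rm Hom}(N, \ZZ)$ is torsion-free, so every $\mu \in M$ factors through $\pi$, identifying $M$ with ${\rm Hom}(\pi(N), \ZZ)$ and giving $\mu(v_i) = \mu(w_j)$ whenever $i \in I_j$, as well as $\mu(c) = \mu(\pi(c))$ for $c \in K$. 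In particular, the parameter $\beta \in N \otimes \CC$ pairs with $M$ identically on both sides.

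For equations \eqref{1}, I would apply the chain rule to $\Phi_c = \rho(c) \, \Psi_{\pi(c)} \circ p_\rho$, using that $\partial_i p_\rho$ has $\rho(v_i)$ in position $j$ and zero elsewhere, where $j$ is the unique index with $i \in I_j$. Combined with the corresponding equation $\partial_{z_j} \Psi_{\pi(c)} = \Psi_{\pi(c) + w_j} = \Psi_{\pi(c + v_i)}$ for $\Psi$ and with the multiplicativity $\rho(c)\rho(v_i) = \rho(c + v_i)$, this immediately gives $\partial_i \Phi_c = \Phi_{c + v_i}$.

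For equations \eqref{2}, I would substitute the above expression for $\partial_i \Phi_c$ into $\sum_i \mu(v_i) x_i \partial_i \Phi_c$ and group the sum over $i$ according to the partition $\{1, \ldots, n\} = I_1 \cup \cdots \cup I_m$. The factor $\mu(v_i)$ reduces to $\mu(w_j)$ for $i \in I_j$ and pulls out of the inner sum, while the remaining inner sum collapses to $z_j = \sum_{i \in I_j} \rho(v_i) x_i$, the $j$-th coordinate of $p_\rho(x)$. The linear equation for $\Psi$ then transforms $\sum_j \mu(w_j) z_j \partial_{z_j} \Psi_{\pi(c)}$ into $\mu(\beta - \pi(c)) \Psi_{\pi(c)}$, and reinstating the prefactor $\rho(c)$ and using $\mu(c) = \mu(\pi(c))$ yields $\mu(\beta - c) \Phi_c$ as required.

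The main obstacle is really only bookkeeping: one must thread the torsion-dependent phase $\rho(c)$ consistently through both the chain rule and the Euler-type operator, and keep elements of $N$ distinct from their images in $\pi(N)$ when evaluating $\mu$ and $\rho$. There is no analytic subtlety beyond the chain rule, and the domain $p_\rho^{-1}(U)$ is automatic from the composition.
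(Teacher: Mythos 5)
Your proposal is correct and matches the paper's proof essentially step for step: both apply the chain rule to \eqref{eq:torext}, use $\rho(c)\rho(v_i)=\rho(c+v_i)$ and $\pi(c)+w_j=\pi(c+v_i)$ for equations \eqref{1}, and regroup the Euler operator over the partition $\{1,\ldots,n\}=I_1\cup\cdots\cup I_m$ using the identification $M={\rm Hom}(N,\ZZ)={\rm Hom}(\pi(N),\ZZ)$ for equations \eqref{2}. Your preliminary remark making explicit that $M$ is torsion-free and that every $\mu$ factors through $\pi$ is a small, helpful elaboration of what the paper states without comment.
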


\begin{proof} For any $c \in N,$ equation (\ref{eq:torext}) implies that
given some $v_i \in N$ and $w_j \in \pi(N)$ such that
$\pi (v_i)=w_j$ we have that
$$
\partial_i \Phi_c (x_, \ldots, x_n)
= \rho(c+v_i) \partial_j \Psi_{\pi(c)} 
(\sum_{i \in I_1} \rho(v_i) x_i, \ldots,
\sum_{i \in I_m} \rho(v_i) x_i).
$$
Since the functions $\Psi_{w}, w \in \pi(K),$ are solutions to the better behaved GKZ in 
$\pi(N),$ and $\pi(c)+w_j= \pi(c) + \pi(v_i)= \pi (c+v_i),$
we obtain indeed that 
$
\partial_i \Phi_c (x_, \ldots, x_n)= \Phi_{c+v_i}.
$
Similarly, for any $\mu \in M={\rm Hom} (N, \ZZ)= {\rm Hom} (\pi(N), \ZZ),$
we have that
$$
\sum_{i=1}^n \mu(v_i) x_i\partial_i \Phi_c (x_1, \ldots, x_n)$$
$$
=\rho(c)
\sum_{j=1}^m \mu(w_j) \big( \sum_{i \in I_j} \rho (v_i) x_i \big) \partial_j 
\Psi_{\pi(c)}
(\sum_{i \in I_1} \rho(v_i) x_i, \ldots,
\sum_{i \in I_m} \rho(v_i) x_i)
$$
$$
=\mu(\beta-c) \Phi_c (x_1, \ldots, x_n). 
$$
\end{proof}

Let $G$ denote the torsion part of $N.$ Since $G$ is finite abelian
group, we have that ${\rm Hom} (G, \CC^{\times}) \simeq G.$ Assume that $G$ has
order $k,$ and let
$(\rho_g)_{g \in G}$ be the corresponding set of independent
characters
in ${\rm Hom} (G, \CC^{\times}) \simeq G.$ When there is no torsion,
we set $|G|=1$ and $\rho_1=1.$
The characters $\rho_g$
can be easily 
extended to become multiplicative characters of $N$
by imposing that they take the value $1$ on the
free part of $N,$ after a choice of splitting. 
Under this convention, we will
view the characters $\rho_g$ as elements in 
${\rm Hom} (N, \CC^{\times}).$ As in formula $(\ref{def:mapchar}),$
for each $g \in G,$
we define the linear surjective
maps $p_g : \CC^n \to \CC^m$ by
$$
p_g (x_1, \ldots, x_n):=
(\sum_{i \in I_1} \rho_g (v_i) x_i, \ldots,
\sum_{i \in I_m} \rho_g (v_i) x_i).
$$

Let $U \subset \CC^m$ be a nonempty open set in $\CC^m$ with
the property that there exists an open set $V$ in $\RR^m$
such that
\begin{equation}\label{setarg}
\begin{split}
U=\{ &(z_1, \ldots, z_m) \, : \, (\log |z_1|, \ldots, \log |z_m|) \in V, \\
&(\arg z_1, \ldots, \arg z_m) \in (-\pi, \pi) \times \ldots \times (-\pi, \pi) \} 
\end{split}
\end{equation}
for a choice of the argument functions $(\arg z_1, \ldots, \arg z_m) \in \RR^m.$
For such a set $U \subset \CC^m,$ we have that:
\begin{lemma}\label{lemma:int}
$
\cap_{g \in G} \
p_g^{-1} (U) \not= \emptyset. 
$
\end{lemma}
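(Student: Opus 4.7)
My plan is to exhibit an explicit point $x=(x_1,\ldots,x_n) \in \CC^n$ lying simultaneously in $p_g^{-1}(U)$ for every $g \in G$. The strategy is to concentrate $x$ on a single representative from each index class $I_j$, which decouples the simultaneous preimage problem into $m$ independent one-variable conditions.

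First, I would fix any point $v^* = (v_1^*, \ldots, v_m^*) \in V$, and for each $j \in \{1, \ldots, m\}$ select an arbitrary index $i_j \in I_j$ (nonempty by definition of the $w_j$). I then look for $x$ of the form $x_i = 0$ for $i \notin \{i_1, \ldots, i_m\}$. With this restriction, the defining formula for $p_g$ collapses to the single term
$$p_g(x)_j = \rho_g(v_{i_j})\, x_{i_j},$$
so the condition for each coordinate $j$ becomes independent of the others.

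Second, the modulus condition then handles itself uniformly in $g$. By construction, each $\rho_g$ is trivial on the free part of $N$ and takes roots-of-unity values on the torsion part, so $|\rho_g(v_{i_j})| = 1$ for every $g$ and every $j$. Setting $|x_{i_j}| = e^{v_j^*}$ therefore yields $(\log|p_g(x)_j|)_{j=1}^m = v^* \in V$ for every $g \in G$ at once, regardless of the phases.

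Third, I write $x_{i_j} = e^{v_j^*} e^{\ii \theta_j}$ for real phases $\theta_j$ to be chosen. The remaining requirement $\arg p_g(x)_j \in (-\pi, \pi)$ amounts to $\rho_g(v_{i_j}) x_{i_j} \notin \RR_{\leq 0}$. For each fixed $j$, each $g \in G$ forbids the single value $\theta_j \equiv \pi - \arg \rho_g(v_{i_j}) \pmod{2\pi}$, so the forbidden set has at most $|G|$ points. Since $G$ is finite, I can pick $\theta_j$ avoiding all these bad values, independently for each $j$; this produces the desired $x$.

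I do not anticipate any substantial obstacle. The crucial observation enabling the argument is that $|\rho_g| \equiv 1$ on $N$, which decouples the modulus condition (which must hold uniformly in $g$) from the argument condition (which, for each $j$, reduces to avoiding a finite union of rays through the origin in the $x_{i_j}$-plane).
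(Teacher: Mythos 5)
Your proof is correct and follows the same overall strategy as the paper: concentrate on one representative index $i_j$ from each class $I_j$, use the fact that $|\rho_g|\equiv 1$ to make the modulus condition hold uniformly in $g$, and then handle the argument condition coordinate by coordinate. Two small points of divergence. The paper keeps the non-representative coordinates $x_i$ nonzero but near the origin and invokes a continuity argument, whereas you set them exactly to zero; your choice is cleaner and is perfectly valid, since only $p_g(x)$, not $x$ itself, must have its coordinates off $\RR_{\leq 0}$. For the phase, the paper confines $\arg x_{i_j}$ to the explicit window $(-\pi,\,-\pi+2\pi/|G|)$ so that adding any $|G|$-th root of unity keeps the product's argument in $(-\pi,\pi)$; your observation that each $g$ forbids only one value of $\theta_j$ modulo $2\pi$, so one merely avoids a finite set, is a more transparent way to reach the same conclusion and sidesteps the arithmetic of the window.
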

\begin{proof} For each set of indices $I_j,$ choose exactly one
$i_j \in I_j$ and a complex number $x_{i_j}$ 
such that 
$$(\log |x_{i_1}|, \ldots, \log |x_{i_m}| )
\in V \setminus \{ 0 \}$$ 
and, for all $j, 1 \leq j \leq m,$  
$$\arg x_{i_j} + \pi < 2 \pi/|G|.$$
From (\ref{setarg}), we see that
$\arg (\rho_g (v_{i_j}) x_{i_j}) \in (-\pi, \pi),$
for any $g \in G$ and $1 \leq j \leq m,$ hence
$$
(\rho_g (v_{i_1}) x_{i_1}, \ldots, \rho_g (v_{i_m}) x_{i_m} ) \in U,
$$
for any $g \in G.$ By continuity, 
it is now possible to choose all the other complex numbers $x_i,
1 \leq i \leq n,$ for those indices $i$ different from any of the $i_j$'s, in 
a small enough neighborhood of the origin in the complex plane such that
$p_g (x_1, \ldots, x_n) \in U,$ for any $g \in G.$
The lemma follows.
\end{proof}

\begin{theorem}\label{thm:tor} 
Let $\Psi^{\lambda}_{\pi(K)}, \lambda \in \Lambda,$
be a set of linearly independent analytic solutions
on an open set $U \subset \CC^m$ satisfying condition (\ref{setarg}) 
to the better behaved GKZ associated to $\beta \in N \otimes \CC$
defined by $(w_1, \ldots, w_m)$ in $\pi(N).$
The associated set of $|\Lambda| \cdot |G|$ functions 
$\Phi^{\lambda, g}_K,$ $\lambda \in \Lambda, g \in G,$ is
a set of linearly independent analytic solutions
on the non-empty open set 
$\cap_{g \in G} \ p_g^{-1} (U)
\subset \CC^n$
to the better behaved GKZ associated to $\beta \in N \otimes \CC$
defined by
$(v_1, \ldots, v_n)$ in $N.$
\end{theorem}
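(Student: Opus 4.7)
The plan has two parts: showing each $\Phi^{\lambda,g}_K$ is a solution (which is immediate from the preceding proposition applied to $\rho = \rho_g$, giving an analytic solution on $p_g^{-1}(U)$ and hence on the nonempty open set $W := \cap_{g \in G} p_g^{-1}(U)$ provided by Lemma \ref{lemma:int}), and showing that the resulting family of $|\Lambda|\cdot|G|$ functions is linearly independent on $W$. Only the second requires real work.

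The conceptual device I would use is that the finite group $G$ of torsion elements acts on the space of solutions of $\GKZ(\Aa, K; \beta)$ by the translation operators $(T_t \Phi)_c := \Phi_{c+t}$ for $t \in G$. This is a well-defined action preserving the defining equations: $c+t \in K$ since $\pi(c+t)=\pi(c)\in K_{\RR}$, translation commutes with $\partial_i$, and in equation \eqref{2} the twist satisfies $\mu(\beta - c - t) = \mu(\beta - c)$ because $\mu \in M = {\rm Hom}(N, \ZZ)$ automatically vanishes on the torsion subgroup. A direct computation from \eqref{eq:torext} then shows that
\[
T_t \Phi^{\lambda,g}_c = \rho_g(t)\, \Phi^{\lambda,g}_c,
\]
so $\Phi^{\lambda,g}_K$ lies in the $\rho_g$-isotypical component of the solution space. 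Since the distinct characters $\rho_g$ of $G$ are linearly independent (equivalently, the character table of $G$ is an invertible matrix), any hypothetical linear relation $\sum_{\lambda, g} a_{\lambda, g} \Phi^{\lambda,g}_K = 0$ splits by isotypical projection (concretely, by character-orthogonality applied to the family of relations obtained by replacing $c$ with $c+t$ for all $t \in G$) into the individual identities
\[
\sum_{\lambda \in \Lambda} a_{\lambda, g_0}\, \Psi^{\lambda}_{\pi(c)}(p_{g_0}(x)) = 0
\]
valid for each $g_0 \in G$, all $c \in K$, and all $x \in W$, after dividing by the nonzero scalar $\rho_{g_0}(c)$.

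To finish, I would observe that $p_{g_0} : \CC^n \to \CC^m$ is a linear surjection, since each of its components is a nontrivial linear functional on the disjoint nonempty block of variables indexed by $I_j$. Hence $p_{g_0}$ is open, $p_{g_0}(W)$ is a nonempty open subset of $U$, and as $c$ ranges over $K$ the index $\pi(c)$ ranges over all of $\pi(K)$. The linear independence of the $\Psi^{\lambda}_{\pi(K)}$ on $U$, combined with unique continuation of analytic functions, then forces $a_{\lambda, g_0} = 0$ for every $\lambda$, completing the proof. The main subtlety to verify carefully is the $G$-equivariance of the isotypical decomposition, i.e.\ that translation by $G$ genuinely preserves the better behaved GKZ system for the given $\beta$; once this is confirmed, the rest of the argument is essentially formal and relies only on finite-group character theory plus the given linear independence of the $\Psi^{\lambda}$.
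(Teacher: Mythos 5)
Your proof is correct and follows essentially the same route as the paper: both arguments reduce linear independence to that of the $\Psi^\lambda$ by considering, for a hypothetical dependence relation, the family of relations obtained by shifting $c$ by torsion elements $t\in G$ and then applying character orthogonality for the finite group $G$. The only differences are cosmetic---you package the torsion shift as a $G$-action on the solution space with isotypical projection, and you are a bit more explicit at the end about the openness of $p_{g_0}$ and unique continuation when passing from $p_{g_0}(W)\subset U$ to all of $U$---but the mechanism is identical to the paper's proof of Theorem~\ref{thm:tor}.
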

\begin{proof} Assume that there exists constants $\alpha_{\lambda,g}$ such that
$$
\sum_{\lambda \in \Lambda, g \in G} \alpha_{\lambda,g} \Phi^{\lambda, g}_c (x) =0,$$
for any 
$c \in K, x \in \cap_{g \in G} \ p_g^{-1} (U).$ It follows that
$$
\sum_{\lambda \in \Lambda, g \in G} \alpha_{\lambda,g} \rho_g (c+c_h)
\Psi^{\lambda}_{\pi(c)} (p_g(x))=0,$$
for any 
$c \in K, x \in \cap_{g \in G} \ p_g^{-1} (U),$
and $c_h \in K$ such that $\pi(c_h)= 0.$

For each fixed $c \in K,$ we have $|G|$ linear relations indexed by $h \in G.$
The orthogonality relations for the
characters of the representations of the finite group $G$ imply that
$$
\sum_{\lambda \in \Lambda} \alpha_{\lambda,g}
\Psi^{\lambda}_{\pi(c)} (p_g(x))=0,$$
for any $g \in G, 
c \in K$ and $x \in \cap_{g \in G} \ p_g^{-1} (U).$
The linear independence of the analytic 
functions $\Psi^{\lambda}_{\pi(K)}, \lambda \in \Lambda,$
in $U$ shows that $\alpha_{\lambda,g}$ are all zero.  
\end{proof}


\section{The better behaved GKZ system with $\beta=0$ and the relation to the 
mixed Hodge structures}\label{added}

In this section we study the \emph{restriction map} 
from the space of solutions of $\GKZ(\Aa, K; \beta)$
to that of $\GKZ(\Aa,K^\circ;\beta)$ given by forgetting $\Phi_c$ for $c\in \partial K$. Our focus is primarily on the case $\beta=0$ which differs significantly from the general case. Our results are closely related to (and can be in principle derived from)  the work of Batyrev in \cite{Batyrev} on mixed Hodge structures of hypersurfaces in 
algebraic tori but our treatment is much more direct and algebraic.

\

We start with elementary examples which show that dimension of the image of the restriction 
map depends on $\beta$.
\begin{example} \label{5.1}
Let $K=\ZZ_{\geq 0}$ and $v_1=1$. Then the solution space to 
$\GKZ(\Aa, K; \beta)$ is one-dimensional, generated by $(\Phi_k),$ 
with $\Phi_k=
(\frac{d}{dx_1})^k x_1^{\beta}$.
In particular,
when $\beta=0$, the restriction to the solutions of $\GKZ(\Aa,K^\circ;\beta)$ is zero.
\end{example}

\begin{example}
Consider $K\subseteq \ZZ^3$ generated by 
$$v_1=(0,0,1),v_2=(0,1,1), v_3=(1,1,1),
v_4=(1,0,1).$$
The solutions to 
$\GKZ(\Aa, K; \beta)$ are determined uniquely by $\Phi_{(0,0,0)}$.
When $\beta=0$, they are given by 
$\Phi_{(0,0,0)}(x_1,\ldots,x_4) = c_1 + c_2 \ln(\frac {x_1x_3}{x_2x_4})$
and thus restrict to zero solutions of $\GKZ(\Aa,K^\circ;\beta)$.
\end{example}

Note that in the above examples the maps $R(f,K^\circ)\to R(f,K)$ are zero.
The main result of this section is that the dimension of the image of the restriction map
coincides with the dimension of the image of $R(f,K^\circ)\to R(f,K)$ in 
the $\beta=0$ 
case. Our approach uses the dual description of the solution space which we now
introduce.

The semigroup ring $\CC[K]$ is endowed with the structure of the module over 
${\rm Sym}^*(M_\CC)$ induced by the multiplication by the logarithmic derivatives 
of $f$. Specifically, the multiplication by the linear function $\mu:N\to\CC$ is given by
\begin{equation}\label{usual}
\mu[n] = \sum_i x_i\mu(v_i)[n+v_i].
\end{equation}
Consider a different module structure on $\CC[K]$, where the multiplication 
is modified to be 
\begin{equation}\label{tilde}
\mu\widehat{[n]} = \sum_i x_i\mu(v_i)\widehat{[n+v_i]} + \mu(n-\beta)\widehat{[n]}
\end{equation}
where we use the $\widehat{~}$ notation to avoid confusion with \eqref{usual}.
It is straightforward to check that $\mu_1\mu_2\widehat{[n]}=\mu_2\mu_1\widehat{[n]}$, so indeed 
$\CC[K]$ is endowed with another module structure over the ring ${\rm Sym}^*(M_\CC)$.
This module structure is no longer compatible with the multiplication in $\CC[K]$, rather the 
action is given by differential operators, see \cite[Def. 7.2]{Batyrev}.
\begin{definition}
We will denote the space $\CC[K]$ with the module structure given by \eqref{tilde}
by $\widehat{\CC[K]}$, where $\beta$ should be clear from the context. The notation
$\CC[K]$ will imply the module structure of \eqref{usual}. The same convention will be
used for other subsets of $N$, for example $\widehat{\CC[K^\circ]}$ and
$\CC[K^\circ]$. We will also use $\widehat{[n]}$ to denote the basis elements of $~\widehat{~}~$ 
spaces.
\end{definition}

\begin{remark}
The $\widehat{\CC[K]}$ module is \emph{not} graded. However, it admits a filtration
$(\widehat{\CC[K]}_{\leq k})$ by 
the degree so that the associated graded module is naturally isomorphic to $\CC[K]$.
\end{remark}

Recall that $\CC[K]$ is a free graded module over ${\rm Sym}^*(M_\CC)$ for nondegenerate $f$,
see \cite[Theorem 4.8]{Batyrev}. The following proposition shows that 
 $\widehat{\CC[K]}$ is also free.
\begin{proposition}\label{basis}
The module $\widehat{\CC[K]}$ is free for any $\beta$. Moreover, suppose we 
have a homogeneous basis $(u_l=\sum_{n}r_{n,l} [n])$  of $\CC[K]$ as a free module 
over ${\rm Sym}^*(M_\CC)$ and have chosen $w_l\in \widehat{\CC[K]} $ made from monomials of 
degree lower than the degree
 of $u_l$. Then the elements $(\sum_{n}r_{n,l} \widehat{[n]}+w_l)$ freely generate   $\widehat{\CC[K]}$.
\end{proposition}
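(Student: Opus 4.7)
My plan is to deduce freeness of $\widehat{\CC[K]}$ from that of $\CC[K]$ via the standard principle that a morphism of filtered modules inducing an isomorphism on the associated gradeds is itself an isomorphism. The filtration $(\widehat{\CC[K]}_{\leq k})$ noted in the preceding remark has associated graded module naturally isomorphic to $\CC[K]$: formula \eqref{tilde} shows that $\mu$ sends $\widehat{\CC[K]}_{\leq k}$ into $\widehat{\CC[K]}_{\leq k+1}$, and the correction term $\mu(n-\beta)\widehat{[n]}$ lies strictly in $\widehat{\CC[K]}_{\leq k}$, so it vanishes in the graded quotient, leaving precisely the multiplication \eqref{usual}.

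Write $d_l := \deg u_l$ and $\tilde u_l := \sum_n r_{n,l} \widehat{[n]} + w_l$. The hypothesis on $w_l$ ensures $\tilde u_l \in \widehat{\CC[K]}_{\leq d_l}$, and the image of $\tilde u_l$ in the $d_l$-th graded piece is exactly $u_l \in \CC[K]_{d_l}$ under the identification above.

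For surjectivity, I would induct on the smallest $k$ with $\xi \in \widehat{\CC[K]}_{\leq k}$. By Batyrev's freeness result \cite[Theorem 4.8]{Batyrev}, the image of $\xi$ in the degree-$k$ graded piece can be written uniquely as $\sum_l s_l u_l$ with $s_l \in {\rm Sym}^{k-d_l}(M_\CC)$ (setting $s_l = 0$ when $k < d_l$). Then $\xi - \sum_l s_l \tilde u_l$ lies in $\widehat{\CC[K]}_{\leq k-1}$, since its graded class at level $k$ vanishes, and the induction hypothesis completes the step. For freeness, suppose $\sum_l s_l \tilde u_l = 0$ with some $s_l \ne 0$. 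Expand each $s_l = \sum_d s_l^{(d)}$ into homogeneous components and let $k$ be the maximum of $d + d_l$ over nonzero $s_l^{(d)}$. Passing to the degree-$k$ associated graded yields the relation $\sum_l s_l^{(k-d_l)} u_l = 0$ in $\CC[K]$; freeness of $(u_l)$ over ${\rm Sym}^*(M_\CC)$ forces every top component $s_l^{(k-d_l)}$ to vanish, contradicting the choice of $k$.

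The main thing to get right is the associated-graded identification: one must verify that in \eqref{tilde} the extra term $\mu(n-\beta)\widehat{[n]}$ is strictly of lower filtration degree than the leading term $\sum_i x_i \mu(v_i)\widehat{[n+v_i]}$, so that it drops out in the graded and the action on $\mathrm{gr}\,\widehat{\CC[K]}$ matches \eqref{usual} exactly. Once this identification is in place, the filtered-to-graded argument above is essentially automatic, and the proof of the two clauses (existence of a free basis, and that any lift of a homogeneous basis via lower-order corrections is again a free basis) is uniform.
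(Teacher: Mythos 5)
Your proposal is correct and follows essentially the same route as the paper: the paper's proof (two sentences long) says to compare top-degree terms via the filtration to get linear independence from that of $(u_l)$, and to prove generation by induction on degree. You have simply filled in the details of this filtered-to-graded argument, including the key check that the correction term $\mu(n-\beta)\widehat{[n]}$ in \eqref{tilde} drops to lower filtration level so that $\mathrm{gr}\,\widehat{\CC[K]}\cong\CC[K]$ as ${\rm Sym}^*(M_\CC)$-modules.
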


\begin{proof}
If $(\sum_{n}r_{n,l} \widehat{[n]}+w_l)$ are linearly dependent over ${\rm Sym}^*(M_\CC)$,
consider the top degrees of the corresponding linear combination. Then we get a contradiction 
with the linear independence of $(\sum_{n}r_{n,l} [n])$. To show that  $(\sum_{n}r_{n,l} \widehat{[n]}+w_l)$
generate $\widehat{\CC[K]}$ over ${\rm Sym}^*(M_\CC)$, proceed by induction on degree.
\end{proof}

\begin{remark}\label{rem:basis}
It is easy to see that under the assumptions of Proposition \ref{basis}, we get an isomorphism
between $\CC[K]$ and $\widehat{\CC[K]}$, obtained by identifying the corresponding 
basis elements, which preserves the filtrations
$\widehat{\CC[K]}_{\leq \bullet}$ and $\CC[K]_{\leq \bullet}$. In fact, it is
an identity on the associated graded spaces (i.e. $\widehat{[n]}\mapsto {[n]}\mod \CC[K]_{\deg<\deg(n)}$)
and any isomorphism with this property 
is described by the above proposition.
\end{remark}

We will denote by $I$ the irrelevant ideal of the graded ring ${\rm Sym}^*{M_\CC}$.
Recall that, according to Definition \ref{def},  $\CC[K]/I\CC[K]$ is the
logarithmic Jacobian ring $R(f,K)$ associated to $f$ and $K.$
\begin{corollary}
The image of $\widehat{ \CC[K]}_{\leq k}$ in $\widehat{\CC[K]}/I\widehat{\CC[K]}$  is
naturally isomorphic to $\widehat{\CC[K]}_{\leq k}/I\widehat{ \CC[K]}_{\leq k-1}$.
The filtration on $\widehat{\CC[K]}/I\widehat{\CC[K]}$  induced from the filtration on
$\widehat{\CC[K]}$ has associated graded spaces naturally isomorphic to the graded 
components of  $\CC[K]/I\CC[K]$.
\end{corollary}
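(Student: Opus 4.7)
The plan is to exploit the free basis of $\widehat{\CC[K]}$ provided by Proposition \ref{basis} to translate the filtration $\widehat{\CC[K]}_{\leq\bullet}$ into a condition on the coefficient polynomials in ${\rm Sym}^*(M_\CC)$, and then to derive both assertions as algebraic identities involving the irrelevant ideal $I$.

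Fix a homogeneous basis $(u_l)$ of $\CC[K]$ with $d_l:=\deg u_l$, and let $(\hat u_l)$ be the corresponding lifted basis of $\widehat{\CC[K]}$ from Proposition \ref{basis}; thus $\hat u_l\in\widehat{\CC[K]}_{\leq d_l}$ and the image of $\hat u_l$ in the associated graded is $u_l$. The central technical step is the claim that for any expansion $\omega=\sum_l p_l\hat u_l$ with $p_l\in{\rm Sym}^*(M_\CC)$, one has $\omega\in\widehat{\CC[K]}_{\leq k}$ if and only if $\deg p_l+d_l\leq k$ for every $l$. The ``if'' direction is immediate, since $\mu\in M_\CC$ raises filtration degree by at most one and $\hat u_l\in\widehat{\CC[K]}_{\leq d_l}$. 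For the ``only if'' direction, let $e:=\max_l(\deg p_l+d_l)$ and examine the image of $\omega$ in $\widehat{\CC[K]}_{\leq e}/\widehat{\CC[K]}_{\leq e-1}\cong\CC[K]_e$: this image equals $\sum_{\deg p_l+d_l=e}(p_l)_{e-d_l}\,u_l$, which is nonzero by freeness of $(u_l)$, so $\omega\notin\widehat{\CC[K]}_{\leq e-1}$, forcing $e\leq k$.

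With this characterization in hand, the first assertion reduces to the identity $I\widehat{\CC[K]}\cap\widehat{\CC[K]}_{\leq k}=I\widehat{\CC[K]}_{\leq k-1}$. The inclusion $\supseteq$ is clear, and for $\subseteq$ any element $\omega=\sum_l p_l\hat u_l$ of the left-hand side has each $p_l\in I$ together with $\deg p_l+d_l\leq k$. Writing $p_l=\sum_j q_{l,j}\mu_j$ for a basis $(\mu_j)$ of $M_\CC$, the inequality $\deg q_{l,j}+d_l\leq k-1$ holds for all $l,j$, so $\omega=\sum_j\mu_j\cdot\bigl(\sum_l q_{l,j}\hat u_l\bigr)$ exhibits $\omega\in I\widehat{\CC[K]}_{\leq k-1}$. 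The image of $\widehat{\CC[K]}_{\leq k}$ in $\widehat{\CC[K]}/I\widehat{\CC[K]}$ is therefore $\widehat{\CC[K]}_{\leq k}/I\widehat{\CC[K]}_{\leq k-1}$, as claimed.

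For the second assertion, the $k$-th associated graded piece of the induced filtration on $\widehat{\CC[K]}/I\widehat{\CC[K]}$ equals $\widehat{\CC[K]}_{\leq k}/(\widehat{\CC[K]}_{\leq k-1}+I\widehat{\CC[K]}_{\leq k-1})$. Projecting to $\widehat{\CC[K]}_{\leq k}/\widehat{\CC[K]}_{\leq k-1}\cong\CC[K]_k$, I would identify the image of $I\widehat{\CC[K]}_{\leq k-1}$ with $(I\CC[K])_k$: the correction term $\mu(n-\beta)\widehat{[n]}$ in \eqref{tilde} has strictly lower filtration degree than $\sum_i x_i\mu(v_i)\widehat{[n+v_i]}$, so on the associated graded the modified action reduces to the ordinary one, whence $\mu\cdot\widehat{\CC[K]}_{\leq k-1}$ projects to $\mu\cdot\CC[K]_{k-1}$. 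This yields $\CC[K]_k/(I\CC[K])_k=R(f,K)_k$, as required. The main obstacle is the filtration characterization in the second paragraph: the modified action \eqref{tilde} can in principle cancel expected top-degree contributions, so one genuinely needs freeness of $\CC[K]$ over ${\rm Sym}^*(M_\CC)$ via the associated-graded identification to rule out such cancellations.
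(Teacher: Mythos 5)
Your proof is correct and takes essentially the same route as the paper, which simply declares the corollary to ``follow immediately from Proposition \ref{basis}''; you have carried out the implicit bookkeeping, namely using the free basis $(\hat u_l)$ to characterize $\widehat{\CC[K]}_{\leq k}$ by the condition $\deg p_l + d_l \leq k$ on the coefficient polynomials, and then deducing both assertions from that characterization together with the reduction of the modified action \eqref{tilde} to the ordinary one on associated graded pieces.
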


\begin{proof} This follows immediately from Proposition \ref{basis}.
\end{proof}

\begin{remark}
Analogous statements regarding $\widehat{\CC[K^\circ]}$ follow along the same lines.
\end{remark}

The following result provides the link between the algebraic
structures introduced in this section and
the better behaved GKZ system.
\begin{proposition}\label{dualdesc}
Let $f$ be nondegenerate. The spaces of analytic solutions
to $\GKZ(\Aa, K; \beta)$ and $\GKZ(\Aa, K^\circ; \beta)$ in a small neighborhood of $x$ 
are naturally isomorphic to the duals of the spaces
$\widehat{\CC[K]}/I\widehat{\CC[K]}$ and $\widehat{\CC[K^\circ]}/I\widehat{\CC[K^\circ]}$
respectively. 
\end{proposition}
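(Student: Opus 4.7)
The proof plan is to combine Theorem \ref{key}, which already identifies the solution space with a set of sequences $(\lambda_c)_{c\in K}$ obeying the recurrence \eqref{GKZlambdas}, with a direct algebraic reading of the twisted module structure \eqref{tilde}. The whole proposition should fall out by unwinding definitions, once we check that the condition to be a $\mathrm{Sym}^*(M_\CC)$-linear functional on $\widehat{\CC[K]}$ vanishing on $I\widehat{\CC[K]}$ is literally the GKZ recursion.

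First I would set up the correspondence. Given an analytic solution $\Phi_K$ on a neighborhood of a generic $x$, set $\lambda_c := \Phi_c(x)$ and define $\varphi \in \widehat{\CC[K]}^\vee$ by $\varphi(\widehat{[c]}) = \lambda_c$ (with arbitrary $\CC$-linear extension, since we will quotient out by $I\widehat{\CC[K]}$ anyway). Conversely, a functional $\varphi$ vanishing on $I\widehat{\CC[K]}$ produces a sequence $\lambda_c := \varphi(\widehat{[c]})$.

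Next I would translate the vanishing condition. Because $I$ is generated by $M_\CC$, requiring $\varphi|_{I\widehat{\CC[K]}} = 0$ is equivalent to $\varphi(\mu \cdot \widehat{[c]}) = 0$ for every $\mu \in M$ and $c \in K$. Using \eqref{tilde}, this becomes
\begin{equation*}
\sum_{i=1}^n x_i\, \mu(v_i)\, \lambda_{c+v_i} \;+\; \mu(c-\beta)\, \lambda_c \;=\; 0,
\end{equation*}
which is exactly equation \eqref{GKZlambdas}. By Theorem \ref{key}, such sequences $(\lambda_c)$ are in bijection with the germs of analytic solutions of $\GKZ(\Aa,K;\beta)$ near $x$, so the assignment $\Phi_K \mapsto \varphi$ is a well-defined linear isomorphism onto $(\widehat{\CC[K]}/I\widehat{\CC[K]})^\vee$. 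The corollary following Remark \ref{rem:basis} identifies the associated graded of $\widehat{\CC[K]}/I\widehat{\CC[K]}$ with $R(f,K)$, so in particular this quotient is finite-dimensional of dimension $\dim R(f,K)$, matching the dimension of the solution space computed earlier; this confirms that no extension/convergence subtlety is hiding in the infinite-dimensional $\widehat{\CC[K]}$.

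For the $K^\circ$ statement, I would repeat the argument verbatim: the version of Theorem \ref{key} valid on $K^\circ$ (Remark \ref{same}) gives the same identification of solutions with sequences $(\lambda_c)_{c\in K^\circ}$ satisfying \eqref{GKZlambdas}, and the exact same unwinding of \eqref{tilde} on the generators $\mu \cdot \widehat{[c]}$ for $c \in K^\circ$ (note that $c \in K^\circ$ and $v_i \in K$ imply $c + v_i \in K^\circ$, so the recurrence stays inside $K^\circ$) produces the isomorphism with $(\widehat{\CC[K^\circ]}/I\widehat{\CC[K^\circ]})^\vee$. The main step requiring care is simply verifying that the twist in \eqref{tilde} — the addition of the $\mu(n-\beta)\widehat{[n]}$ term to the ordinary $\mathrm{Sym}^*(M_\CC)$-action — is precisely the term that transports the ``classical'' Jacobian-style relations into the inhomogeneous GKZ relations \eqref{GKZlambdas}; once this is observed, everything else is bookkeeping.
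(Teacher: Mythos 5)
Your proof is correct and follows the same route as the paper, which simply states that the proposition ``follows immediately from Theorem \ref{key} and Remark \ref{same}''; you have filled in the omitted details by explicitly unwinding the twisted module structure \eqref{tilde}, checking that vanishing of a functional on $I\widehat{\CC[K]}$ (equivalently on all $\mu\cdot\widehat{[c]}$, since $I$ is generated in degree one) is precisely the recursion \eqref{GKZlambdas}, and noting that the interior $K^\circ$ is closed under addition of the $v_i$. One small slip: since the $\widehat{[c]}$ already form a $\CC$-basis of $\widehat{\CC[K]}$, specifying $\varphi(\widehat{[c]})=\lambda_c$ determines $\varphi$ uniquely, so no ``arbitrary extension'' is needed; this does not affect the argument.
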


\begin{proof}
The statement follows immediately from Theorem \ref{key} and Remark \ref{same}.
\end{proof}

We will now focus our attention on the case $\beta=0$. 
What's remarkable about this case is that the 
$\widehat{~}$ structure induces similar structures
on $\CC[\theta],$ for all the faces $\theta$ of $K$. Specifically, we 
have the following.
\begin{proposition}
Let $\beta=0$.
For a face $\theta$ of the cone 
$K,$ consider the ${\rm Sym}^*(M_\CC)$ module  $\widehat {\CC[\theta]}$ 
given by \eqref{tilde} for $v_i\in \theta$. This module structure descends to the module 
structure for ${\rm Sym}^*((\CC\theta)^\dual)$. 
This structure is also compatible with the natural surjection 
$\widehat {\CC[K]}\to \widehat {\CC[\theta]}$.
\end{proposition}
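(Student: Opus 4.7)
The plan is to split the statement into two assertions and verify each directly from formula \eqref{tilde}, using only two inputs: that $\beta=0$, and that $\theta$ is a face of $K$, equivalently $\theta=K\cap\ker\phi$ for some linear functional $\phi$ with $\phi\geq 0$ on $K$.

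First I would establish the descent from $\mathrm{Sym}^*(M_\CC)$ to $\mathrm{Sym}^*((\CC\theta)^\dual)$. Take $\mu\in M_\CC$ in the kernel of the restriction $M_\CC\twoheadrightarrow(\CC\theta)^\dual$, i.e.\ $\mu|_{\CC\theta}=0$. For any $n\in\theta$, since the defining formula for $\widehat{\CC[\theta]}$ uses only $v_i\in\theta$, formula \eqref{tilde} with $\beta=0$ gives
$$\mu\widehat{[n]} \;=\; \sum_{v_i\in\theta} x_i\mu(v_i)\widehat{[n+v_i]} \;+\; \mu(n)\widehat{[n]}.$$
Every $v_i$ in the sum lies in $\CC\theta$, so $\mu(v_i)=0$; and $n\in\theta\subset\CC\theta$, so $\mu(n)=0$. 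Thus $\mu$ acts as zero. This is the one place where $\beta=0$ is essential: with a nonzero $\beta$ the diagonal coefficient would be $\mu(n-\beta)=-\mu(\beta)$, which need not vanish.

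Next I would verify compatibility with the natural surjection $\pi_\theta:\widehat{\CC[K]}\to\widehat{\CC[\theta]}$, which is the $\CC$-linear map sending $\widehat{[n]}$ to itself when $n\in\theta$ and to $0$ otherwise. To show $\pi_\theta$ is a $\mathrm{Sym}^*(M_\CC)$-module map, I would check that (i)~its kernel $J=\mathrm{span}\bigl\{\widehat{[n]}:n\notin\theta\bigr\}$ is a submodule, and (ii)~the induced action on $\widehat{\CC[K]}/J$ agrees with the action on $\widehat{\CC[\theta]}$. For (i): if $n\in K\setminus\theta$, then $\phi(n)>0$, so $\phi(n+v_i)\geq\phi(n)>0$ for every $v_i\in K$, whence $n+v_i\notin\theta$; also $\mu(n)\widehat{[n]}\in J$ trivially. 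So every term of $\mu\widehat{[n]}$ lies in $J$. For (ii): if $n\in\theta$ and $v_i\notin\theta$, then $\phi(n+v_i)=\phi(v_i)>0$, so $\widehat{[n+v_i]}\in J$ and disappears in the quotient. What remains of $\mu\widehat{[n]}$ is exactly $\sum_{v_i\in\theta}x_i\mu(v_i)\widehat{[n+v_i]}+\mu(n)\widehat{[n]}$, matching the $\widehat{\CC[\theta]}$-action.

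The main obstacle is essentially bookkeeping: tracking which terms of the shift sum survive in the quotient and which contribute zero because $\mu$ vanishes on $\CC\theta$. Neither step is deep; the content of the proposition is really the observation that setting $\beta=0$ is what makes the diagonal piece $\mu(n-\beta)\widehat{[n]}$ compatible with the face decomposition, since only for $\beta=0$ does $n\in\CC\theta$ force $\mu(n-\beta)=0$ whenever $\mu|_{\CC\theta}=0$.
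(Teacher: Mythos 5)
Your proof is correct and takes essentially the same approach as the paper, whose proof consists of the single remark that the claims follow from the definitions, with a note that $\beta=0$ is essential because otherwise $\mu\in{\rm Ann}(\CC\theta)$ would not act trivially on $\widehat{\CC[\theta]}$ (since $\mu(n-\beta)=-\mu(\beta)$ need not vanish). You have simply spelled out the routine verification the authors left to the reader, correctly identifying the face-functional $\phi$ as the bookkeeping device and pinpointing the same role of $\beta=0$.
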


\begin{proof}
The statements follow  from the definitions. Note that
$\beta=0$ is essential. Otherwise, the module structure does not descend 
to ${\rm Sym}^*((\CC\theta)^\dual)$ because $\mu\in{\rm Ann}(\theta)$ no longer
act trivially on $\widehat {\CC[\theta]}$.
\end{proof}

The key result of this section is that among the various 
isomorphisms of Proposition \ref{basis} there exists an isomorphism of
${\rm Sym}^*(M_\CC)$ modules 
$\widehat{\CC[K]}\simeq \CC[K]$ which is compatible with the restrictions
to the faces.

\begin{theorem}\label{iso}
Let $\beta=0$. Then there exists (a non-canonical) collection of isomorphisms between
${\rm Sym}^*((\CC\theta)^\dual)$ modules 
$\widehat{\CC[\theta]}\simeq \CC[\theta]$ which commute with the surjections
$\widehat{\CC[\theta_1]}\to \widehat{\CC[\theta_2]}$ and 
$\CC[\theta_1]\to \CC[\theta_2]$ for all face inclusions $\theta_2\subseteq \theta_1$.
Moreover, these isomorphisms may be chosen to preserve the filtration and 
to be identity on the associated graded spaces.
\end{theorem}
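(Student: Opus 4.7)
The plan is to construct the isomorphisms $\phi_\theta$ by induction on the face lattice of $K$ ordered by dimension. The base case $\theta=\{0\}$ is trivial: both $\widehat{\CC[\{0\}]}$ and $\CC[\{0\}]$ equal $\CC$, and one takes $\phi_{\{0\}}=\mathrm{id}$. For the inductive step, fix a face $\theta$ with facets $\theta_1,\ldots,\theta_r$ and assume compatible isomorphisms $\phi_{\theta'}$ have been constructed for all $\theta'\subsetneq\theta$. I would begin with any isomorphism $\psi_\theta:\widehat{\CC[\theta]}\to\CC[\theta]$ supplied by Proposition \ref{basis} and Remark \ref{rem:basis}, which preserves the filtration and is the identity on the associated graded. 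For each facet $\theta_i$, composition with the surjections yields an induced isomorphism $\bar\psi_{\theta_i}$ which in general differs from the preassigned $\phi_{\theta_i}$. The discrepancy $\delta_{\theta_i}:=\phi_{\theta_i}\circ\bar\psi_{\theta_i}^{-1}$ is a ${\rm Sym}^*((\CC\theta_i)^\dual)$-module automorphism of $\CC[\theta_i]$ of the form $\mathrm{id}+\epsilon_{\theta_i}$ with $\epsilon_{\theta_i}$ strictly degree-lowering, and by induction the family $(\epsilon_{\theta_i})$ is compatible with all face restrictions among proper faces of $\theta$.

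The central task is then to lift this family to a single endomorphism $\epsilon_\theta$ of $\CC[\theta]$; setting $\phi_\theta:=(\mathrm{id}+\epsilon_\theta)\circ\psi_\theta$ will complete the inductive step. I would proceed in two moves. First, assemble the $\epsilon_{\theta_i}$ into an endomorphism of $\CC[\partial\theta]:=\CC[\theta]/\CC[\theta^\circ]$ by means of the Mayer--Vietoris-type exact sequence of ${\rm Sym}^*((\CC\theta)^\dual)$-modules
\begin{equation*}
0 \to \CC[\partial\theta] \to \bigoplus_{i=1}^r \CC[\theta_i] \to \bigoplus_{1\le i<j\le r} \CC[\theta_i\cap\theta_j],
\end{equation*}
whose exactness is verified by a coefficient chase exploiting that each lattice point of $\partial\theta$ lies in a well-defined subset of facets. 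The compatibility of the $\epsilon_{\theta_i}$ is precisely what forces their direct sum to preserve the equalizer and thereby descend to an endomorphism $\epsilon_{\partial\theta}$. Second, lift the composition $\CC[\theta]\twoheadrightarrow\CC[\partial\theta]\xrightarrow{\epsilon_{\partial\theta}}\CC[\partial\theta]$ through the surjection $\CC[\theta]\twoheadrightarrow\CC[\partial\theta]$: this is possible because nondegeneracy of $f$ makes $\CC[\theta]$ free, and hence projective, over ${\rm Sym}^*((\CC\theta)^\dual)$ by Batyrev's theorem. Choosing lifts of homogeneous basis elements within the corresponding filtered piece yields $\epsilon_\theta$ strictly degree-lowering, hence locally nilpotent on the finite-dimensional graded components, so that $\mathrm{id}+\epsilon_\theta$ is an automorphism.

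The hard part will be simultaneously controlling the filtration throughout the projective lift and verifying the Mayer--Vietoris exactness. Both are checkable: the exactness is combinatorial, and the filtration control follows because $\CC[\theta]\twoheadrightarrow\CC[\partial\theta]$ is strict with respect to the degree filtration, so that splittings of free modules may be arranged to respect it.
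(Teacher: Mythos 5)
The gap is in the claim that ``composition with the surjections yields an induced isomorphism $\bar\psi_{\theta_i}$.'' For $\psi_\theta$ to descend through the quotients $\widehat{\CC[\theta]}\twoheadrightarrow\widehat{\CC[\theta_i]}$ and $\CC[\theta]\twoheadrightarrow\CC[\theta_i]$, it would have to carry the kernel of the first (the span of $\widehat{[n]}$ with $n\notin\theta_i$) into the kernel of the second, and a generic isomorphism from Proposition \ref{basis} does not do this: it sends $\widehat{[n]}$ to $[n]$ plus lower-degree terms with unconstrained support in $\theta$. For example, with $\theta=K$ the cone on $v_1=(0,1)$, $v_2=(1,1)$ in $\ZZ^2$ and $\theta_1=\RR_{\geq 0}v_1$, a basis-choice isomorphism may send $\widehat{[v_2]}\mapsto[v_2]+c[0]$ with $c\ne 0$; then $\widehat{[v_2]}$ lies in the kernel for $\theta_1$ but its image does not, since $0\in\theta_1$. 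Arranging a single $\psi_\theta$ that descends to all facets simultaneously is essentially the assertion of Theorem \ref{iso} one dimension down, so the proposal is circular at this step.

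The paper avoids the circularity by never fixing a generic $\psi_\theta$: it first builds the boundary isomorphism $\widehat{\CC[\partial\theta]}\to\CC[\partial\theta]$ directly from the inductively constructed facet isomorphisms, using the cellular resolution (your Mayer--Vietoris sequence, extended through all codimensions), and then lifts that specific isomorphism to $\widehat{\CC[\theta]}\to\CC[\theta]$ via an explicit choice of free bases split into a part mapping onto the boundary and a part supported away from it. Your Mayer--Vietoris gluing and projective-lift ideas are sound and consistent with the paper's intent; the repair is to glue the $\phi_{\theta_i}$ into a boundary isomorphism $\phi_{\partial\theta}$ first, and then lift $\phi_{\partial\theta}$ itself through the surjections, using freeness of $\widehat{\CC[\theta]}$ over ${\rm Sym}^*((\CC\theta)^\dual)$ and degree bookkeeping to ensure the lift is a filtered isomorphism inducing the identity on associated graded pieces, rather than fixing a $\psi_\theta$ in advance and trying to measure facet discrepancies against it.
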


\begin{proof}
Clearly, the statement of the theorem amounts to finding an isomorphism
$\widehat{\CC[K]}\simeq \CC[K]$  of Proposition \ref{basis} and Remark \ref{rem:basis}
with the property that 
$\widehat{[n]}$ is mapped into $\sum_{n_1}\alpha_{n_1,n}[n]$ where $\alpha_{n_1,n}$ 
are only nonzero for $n_1$ in the interior of the smallest face of $K$ that contains $n$.
The isomorphism will be constructed by induction on $\theta\subseteq K$,
inspired by the arguments of \cite{BresslerLunts}.

The base of induction $\theta=\{0\}$ is trivial. Suppose that
the desired isomorphisms have been constructed for all $\sigma\subsetneq\theta$ for
some face $\theta\subseteq K$, and that they are compatible with the face inclusions.

Consider the modules $\widehat{\CC[\partial\theta]}$ and ${\CC[\partial\theta]}$ over
${\rm Sym}^*((\CC\theta)^\dual)$
which are the cokernels of the inclusion maps $\widehat{\CC[\theta^\circ]}\to
\widehat{\CC[\theta]}$ and ${\CC[\theta^\circ]}\to
{\CC[\theta]}$ respectively. These modules admit ``cellular" resolutions
(cf. \cite[Def. 3.3]{BresslerLunts}) 
$$
\begin{array}{cccccccc}
0&\to&\widehat{\CC[\partial\theta]}&\to&
\displaystyle\bigoplus_{\stackrel{\sigma\subset\theta}{{\rm codim}(\sigma,\theta)=1}} 
\widehat{\CC[\sigma]}
&\to&
\displaystyle\bigoplus_{\stackrel{\sigma\subset\theta}{{\rm codim}(\sigma,\theta)=2}} 
\widehat{\CC[\sigma]}&\to\ldots
\\
&&&&\downarrow&&\downarrow&\\
0&\to&{\CC[\partial\theta]}&\to&
\displaystyle\bigoplus_{\stackrel{\sigma\subset\theta}{{\rm codim}(\sigma,\theta)=1}} 
{\CC[\sigma]}
&\to&
\displaystyle\bigoplus_{\stackrel{\sigma\subset\theta}{{\rm codim}(\sigma,\theta)=2}} 
{\CC[\sigma]}&\to\ldots
\end{array}
$$
with the vertical arrows being the isomorphisms constructed by the induction assumption.
Thus there is a unique isomorphism $\widehat{\CC[\partial\theta]}\to{\CC[\partial\theta]}$
that makes the diagram commute. Moreover, this isomorphism will be compatible 
with the filtration and will be an identity on the associated graded objects, since the same
is true for the vertical maps of the above diagram.

We will now show how to lift the isomorphism $\widehat{\CC[\partial\theta]}\to{\CC[\partial\theta]}$
to $\widehat{\CC[\theta]}\to{\CC[\theta]}$.
Denote  by $I$ the  irrelevant ideal of the polynomial ring ${\rm Sym}^*((\CC\theta)^\dual)$.
We have a surjection of graded vector spaces
$$
\CC[\theta]/I\CC[\theta] \to {\CC[\partial\theta]}/I{\CC[\partial\theta]}\to 0.
$$
Lift a graded basis $S$ of ${\CC[\partial\theta]}/I{\CC[\partial\theta]}$ to 
a set $S_1$ of graded elements
of $\CC[\theta]/I\CC[\theta]$. Then complete $S_1$ to a basis of  $\CC[\theta]/I\CC[\theta]$
by adding elements from a
set $S_2$ which map to zero in ${\CC[\partial\theta]}/I{\CC[\partial\theta]}$. We can
lift the sets $S_1$ and $S_2$ to subsets of homogeneous elements $S_3$ and $S_4$ 
in $\CC[\theta],$ respectively. We can do it in such a way as to ensure that $S_4$ maps to $0$ 
in $\CC[\partial\theta]$.
Indeed, for any lift, the image is in $I\CC[\partial\theta]$, which is in the image of $I\CC[\theta]$.
Thus the lifts can be adjusted to go to $0$ by subtracting elements supported on the boundary
of $\theta$. 
We now define $F_3$ and $F_4$ as graded submodules of $\CC[\theta]$ generated by $S_3$
and $S_4$ respectively. We have that $\CC[\theta]=F_3\oplus F_4$ by 
Nakayama's lemma where 
$F_4$ maps to zero in $\CC[\partial\theta]$ and $F_3$ maps surjectively
onto $\CC[\partial\theta].$

Let $S_5$ be the image of the set $S_3$ in $\CC [\partial \theta.]$ The set
$S_5$ can  be identified with a subset
$\widehat{S_5}$ in
$\widehat{\CC[\partial\theta]}$ by the isomorphism we have already established.
We then lift the set $S_3$
to a subset $\widehat{S_3}$ of $\widehat{\CC[\theta]}$ so that the elements
of $\widehat{S_3}$ have the same
leading terms as those of $S_3.$
Similarly, we consider $\widehat{S_4}$ in $\widehat{\CC[\theta]}$ which
have the same leading terms as $S_4$ and restrict to zero on the boundary.
Consider the morphism between $\CC[\theta]$ and $\widehat{\CC[\theta]}$ obtained by 
sending $S_3$ and $S_4$ to $\widehat{S_3}$ and $\widehat{S_4}$ respectively. It is an isomorphism
by Proposition \ref{basis}. It is also compatible with the isomorphism  $\CC[\partial\theta]\simeq\widehat{\CC[\partial\theta]}$ by construction. This finishes the induction step.
\end{proof}

For any $\beta$ there is a natural map from the solutions of $\GKZ(K,\beta)$ to 
the solutions of $\GKZ(K^\circ,\beta)$ obtained by looking at $\Phi_c,c\in K^\circ$.
Theorem \ref{iso} implies the following statement about the dimension of the image.
\begin{corollary}\label{r1}
The space of solutions of $\GKZ(K^\circ,0)$ that can be extended to
$\GKZ(K,0)$ has the same dimension as the image $R_1(f,K)$ of the map
$$
\CC[K^\circ]/I\CC[K^\circ] \to \CC[K]/I\CC[K]
$$
considered in \cite{BM}. In fact, for a given point $(x_i)$ this space admits a natural filtration by the order of vanishing at this point, and the associated graded space is naturally isomorphic to the dual of $R_1(f,K)$.
\end{corollary}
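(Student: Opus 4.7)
The plan is to reduce the statement to a purely algebraic assertion about the modules $\widehat{\CC[K]}/I\widehat{\CC[K]}$ and $\widehat{\CC[K^\circ]}/I\widehat{\CC[K^\circ]}$ using the duality of Proposition \ref{dualdesc}, and then to collapse the $\widehat{~}$ versions to the ordinary ones by invoking Theorem \ref{iso} in the case $\beta=0$.

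First I would check that the restriction map on solutions,
$$
\text{Sol}(\GKZ(\Aa,K;0))\to \text{Sol}(\GKZ(\Aa,K^\circ;0)),\qquad (\Phi_c)_{c\in K}\mapsto (\Phi_c)_{c\in K^\circ},
$$
is the linear dual of the map
$$
\widehat{\CC[K^\circ]}/I\widehat{\CC[K^\circ]}\longrightarrow \widehat{\CC[K]}/I\widehat{\CC[K]}
$$
induced by the inclusion $K^\circ\hookrightarrow K$. This is immediate from the form of the duality pairing implicit in Proposition \ref{dualdesc} and Theorem \ref{key}: a solution is encoded by its coefficient sequence $(\lambda_c)$, and restricting to $K^\circ$ is dual to the inclusion of indices. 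Since both solution spaces are finite dimensional, the dimension of the image of restriction equals the dimension of the image of the dual algebraic map.

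Next I would apply Theorem \ref{iso} with $\beta=0$, which supplies a commutative square
$$
\begin{CD}
\widehat{\CC[K^\circ]} @>>> \widehat{\CC[K]}\\
@V{\simeq}VV @VV{\simeq}V\\
{\CC[K^\circ]} @>>> {\CC[K]}
\end{CD}
$$
of ${\rm Sym}^*(M_\CC)$-module isomorphisms that are compatible with the face restrictions, preserve the degree filtration, and are the identity on associated graded. Passing to quotients by $I$ identifies the top row with the bottom row, and by definition the image of the bottom row is $R_1(f,K)$. This gives the dimension equality.

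For the filtered statement, the filtration by order of vanishing on $\text{Sol}(\GKZ(\Aa,K^\circ;0))$ (i.e.\ $F_k=\{\Phi_{K^\circ}:\Phi_c(x)=0$ for $\deg c<k\}$) dualizes under Proposition \ref{dualdesc} and the $\widehat{~}\simeq$ ordinary identification to the natural degree filtration on $\CC[K^\circ]/I\CC[K^\circ]$, and similarly on the $K$ side. Because the isomorphisms of Theorem \ref{iso} are the identity on associated graded, the induced filtration on the image of the restriction map has associated graded naturally identified with the graded-wise image of $R(f,K^\circ)\to R(f,K)$, that is, with the natural grading on $R_1(f,K)$. Dualizing gives the second part of the corollary.

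The only real obstacle is the bookkeeping around dualities and filtrations: one must verify that the filtration by vanishing order on the solution side corresponds to the degree filtration on the $\widehat{~}$-quotient under the duality of Proposition \ref{dualdesc}, and that the restriction map is strictly filtered so that taking associated gradeds commutes with taking the image. Both of these are built into Theorem \ref{main} together with Remark \ref{same} and the ``identity on associated graded'' clause of Theorem \ref{iso}, so after these compatibilities are laid out the chain of identifications is essentially formal.
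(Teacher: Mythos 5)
Your proposal follows essentially the same route as the paper: dualize via Proposition \ref{dualdesc}, invoke the face-compatible isomorphisms of Theorem \ref{iso} (whence the square involving $\CC[K^\circ]$, obtained by passing to the kernels of the surjections onto $\widehat{\CC[\partial K]}$ and $\CC[\partial K]$), pass to quotients by $I$, and then track the degree filtration to get the associated-graded statement. The only place you are thinner than the paper is in the filtration bookkeeping at the end --- the paper explicitly writes out the filtration pieces of the image and notes that the resulting identification of associated gradeds is independent of the choices made in Theorem \ref{iso} --- but the ingredients you cite are the right ones and the argument is the same.
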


\begin{proof}
By Proposition \ref{dualdesc}, the dual of the restriction map in question is 
the map  
$$
\widehat{\CC[K^\circ]}/I\widehat{\CC[K^\circ]}\to 
\widehat{\CC[K]}/I\widehat{\CC[K]}$$
induced by the inclusion $\CC[K^\circ]\subset\CC[K]$.

By the argument of Theorem \ref{iso}, the isomorphism constructed therein induces 
a commutative diagram
$$
\begin{array}{ccccc}
\widehat{\CC[K]}&\to&\widehat{\CC[\partial K]}&\to& 0\\
\downarrow&&\downarrow&&\\
{\CC[K]}&\to&{\CC[\partial K]}&\to& 0\\
\end{array}
$$
where vertical maps preserve the filtration 
and reduce to the identity on the associated graded spaces.
By considering the kernels of the horizontal maps we get a commutative diagram of
${\rm Sym}^*(M_\CC)$ modules
\begin{equation}\label{compatible}
\begin{array}{ccccc}
0&\to&\widehat{\CC[K^\circ]}&\to&\widehat{\CC[ K]}\\
&&\downarrow&&\downarrow\\
0&\to&{\CC[K^\circ]}&\to&{\CC[K]}\\
\end{array}
\end{equation}
with the same property. 
This induces the commutative diagram 
$$
\begin{array}{ccc}
\widehat{\CC[K^\circ]}/I\widehat{\CC[K^\circ]}&\to&\widehat{\CC[ K]}/I\widehat{\CC[K]}\\
\downarrow&&\downarrow\\
{\CC[K^\circ]}/I{\CC[K^\circ]}&\to&{\CC[K]}/I{\CC[K]}
\end{array}
$$
which preserves the filtrations which are images of the corresponding filtrations before
taking the quotient by $I$. We have thus shown that the images of the corresponding maps
are isomorphic which implies the statement of the first sentence of this corollary.

We will now study the filtration and its associated graded object.
The image of the part of the filtration that comes from $\widehat{\CC[K^\circ]}_{\leq k}$ is
given by 
$$
\widehat{\CC[K^\circ]}_{\leq k}/(I\widehat{\CC[K]}_{\leq k-1} \cap \widehat{\CC[K^\circ]}_{\leq k}),
$$
in view of the same statement for the image of $\widehat{\CC[K^\circ]}_{\leq k}$. The associated
graded piece is then given by 
$$
\widehat{\CC[K^\circ]}_{\leq k}/(\widehat{\CC[K^\circ]}_{\leq k-1}+(I\widehat{\CC[K]}_{\leq k-1} \cap \widehat{\CC[K^\circ]}_{\leq k}))
$$
The isomorphism between this space and 
$$
{\CC[K^\circ]}_{\leq k}/({\CC[K^\circ]}_{\leq k-1}+(I{\CC[K]}_{\leq k-1} \cap{\CC[K^\circ]}_{\leq k}))=R_1(f,K)_k
$$
induced by Theorem \ref{iso} naturally lifts to the isomorphism between
$\widehat{\CC[K^\circ]}_{\leq k}/\widehat{\CC[K^\circ]}_{\leq k-1}$
and ${\CC[K^\circ]}_{\leq k}/{\CC[K^\circ]_{\leq k-1}}$ which is natural in the sense that it does not 
depend on the choices of Theorem \ref{iso}. This finishes the proof.
\end{proof}

\begin{remark}
It is in general impossible to construct the commutative diagram \eqref{compatible}
if $\beta\neq 0$. For instance, in the set up of the Example \ref{5.1} we must have 
$\widehat {[0]}\mapsto [0]$ for the right vertical arrow. However, this isomorphism
does not identify the ideals $\widehat{\CC[K^\circ]}$ and $\CC[K^\circ]$. 
\end{remark}

\begin{remark}
In place of $\CC[K^\circ]$, we may similarly consider submodules of $\widehat{\CC[K]}$ 
which are linear combinations  of monomials that are supported at faces of dimension 
at least as large as some given number. This gives a filtration on 
$\widehat{\CC[ K]}/I\widehat{\CC[K]}$. Together with the filtration by the degree these
are pretty much the weight and Hodge filtration on the cohomology 
of a hypersurface in algebraic torus, see \cite{Batyrev}.
\end{remark}

\begin{remark}
The result of Corollary \ref{r1} can be presumably deduced from \cite[section 8]{Batyrev},
but we were unable to find a precise reference. At any rate, this would have been a very roundabout 
derivation, as compared to our method based on Theorem \ref{iso}.
\end{remark}

\section{Further comments and open problems} 
One can consider a \emph{partial semigroup} version of the better behaved
GKZ hypergeometric system as follows. Let $\Sigma$ be a fan in the cone
$K$ spanned by $v_i$. We can replace the equations $\partial_i\Phi_c=\Phi_{c+v_i}$
in Definition \ref{trueGKZdef} by 
$$
\partial_i \Phi_c = \left\{\begin{array}{ll}
\Phi_{c+v_i},& {\rm there~exists~a~cone~of~} \Sigma
 {\rm ~that~contains} ~v_i~{\rm and}~c
\\
0,&{\rm otherwise.}
\end{array}\right.
$$
Then if $\Sigma$ admits a piecewise linear strictly convex function, then all the arguments
of this paper are applicable to this more general system.

\smallskip
The properties of the better behaved version of the GKZ system show that 
this system is better suited than the usual GKZ if any type of
functorial considerations are to be invoked. We expect that the mirror symmetric 
identification of the Fourier-Mukai transform and the analytic
continuation formulae for the solutions to the GKZ system discussed
in \cite{BH1} continues to hold in the better behaved GKZ case.  
In fact, the context of this paper is more natural since the
rank of the space of local solutions to the better behaved GKZ 
one side matches the rank of the orbifold cohomology/stacky $K$-theory 
on the other side. This was not  the case in our previous work. 
It would be an interesting problem to study an appropriately defined
category of better behaved GKZ systems and its functorial properties,
part of which would mirror the properties of category of toric
DM stacks. In the same realm, we expect that, in an appropriate sense,  
the system $\GKZ(\Aa, K; \beta)$ is dual to 
the system $\GKZ(\Aa, K^{\circ}; -\beta).$ 

More importantly, we believe that the better behaved GKZ system
lends itself to a process of categorification, which as a first step,
is expected to provide a non-commutative categorical resolution of a Gorenstein
toric singularity. Such a categorification will have to pass all
the toric mirror symmetric checks, and, as such, would have a transcendental
component which is missing in the algebraic proposals of 
non-commutative resolutions currently 
available in the literature. We hope to come back to this problem in 
a future paper.

\end{document}